\newtheorem{thm}{Theorem}[section]
\newtheorem{cor}[thm]{Corollary}
\newtheorem{lem}[thm]{Lemma}
\theoremstyle{definition}
\newtheorem{defn}[thm]{Definition}
\newtheorem{rem}[thm]{Remark}
\numberwithin{equation}{section}
\DeclareMathOperator{\dist}{dist}
\DeclareMathOperator{\diam}{diam}
\newcommand{\A}{\mathcal{A}}
\newcommand{\N}{\mathbb{N}}
\newcommand{\R}{\mathbb{R}}
\newcommand{\set}[1]{\left\{#1\right\}}
\newcommand{\eps}{\varepsilon}
\keywords{$\omega$-chaos; LY-chaos; distributional chaos; dendrite.}
\subjclass[2010] {37B05, 37B10,  37B20, 54F50}
\begin{document}
\title{Dendrites and chaos}

\author{TOMASZ DRWI\k{E}GA}

\address{Faculty of Applied Mathematics, AGH University of Science and Technology, Address\\
al. A. Mickiewicza 30, 30-059 Krak\'ow, Poland\\
drwiega@agh.edu.pl}
\email{drwiega@agh.edu.pl}

\maketitle
\begin{abstract}
We answer the two questions left open in [Z.~Ko\v{c}an, \emph{Chaos on one-dimensional compact metric spaces}, Internat. J. Bifur. Chaos Appl. Sci. Engrg. \textbf{22}, article id: 1250259 (2012)] i.e. whether there is a relation between $\omega$-chaos and distributional chaos and whether there is a relation between an infinite LY-scrambled set and distributional chaos for dendrite maps. We construct a continuous self-map of a dendrite without any DC3 pairs but containing an uncountable $\omega$-scrambled set. To answer the second question we construct a dendrite $\mathcal{D}$ and a continuous dendrite map without an infinite LY-scrambled set but with  DC1 pairs.
\end{abstract}

\section{Introduction}

Ko\v{c}an \cite{koc12} studied relations between some notions of chaos for continuous maps on dendrites. He concluded with a list of open problems:
\begin{enumerate}[(1)]
\item Does the existence of an uncountable $\omega$-scrambled set imply distributional chaos?
\item Does the existence of an uncountable $\omega$-scrambled set imply existence of an infinite LY--scrambled set?
\item Does distributional chaos imply the existence of an infinite LY--scrambled set?
\end{enumerate}
Here we focus on the relation between $\omega$-chaos and distributional chaos on dendrites and we answer two of them in the negative. To construct appropriate example to answer one of the questions we use properties of spacing shifts (see \cite{lau72}). In this paper after reviewing the basic definitions and properties of dynamical systems we recall a variety of definitions of chaos. We also state a few useful properties on the Gehman dendrite and we  construct the self-map of dendrite which does not have DC3 pairs, but has uncountable $\omega$-scrambled set. Moreover, we show how construct extension of the shift without DC3 pairs to the mixing shift with no DC3 pairs. In the last section of this paper we give an example of shift with  DC1 pair but without an infinite LY-scrambled set.

\section{Definitions and notations} \label{sec:def}

Throughout this paper $\N$ denotes the set $\{1,2,3, \dots \} $ and $\N_0 = \N \cup \{0\}.$ By a \emph{dynamical system} we mean a pair $(X,f)$, where $X$ is a compact metric space with a fixed metric $\rho$ and $f$ is a continuous map from $X$ to itself. The \emph{orbit} of $x \in X$ is the set $O(x):=\{f^{k}(x): k\geq 0\}$, where $f^k$ stands for the $k$-fold composition of $f$ with itself. For $x \in X$ the $\omega$-$limit$ $set$ is the set $$ \omega_f(x):= \{y\in X: \exists{(n_k)^{\infty}_{k=1} \subseteq \N, n_k \nearrow \infty} ~  \lim_{k \to \infty}f^{n_k}(x)=y\}.$$ A set $A \subset X$ is \emph{invariant} under $f$ if $f(A) \subset A$. We say that   $A \subseteq X$ is a \emph{minimal} for $f$ if $A$ is nonempty, closed, invariant under $f$, and does not contain any proper subset which satisfies these three conditions. We say that the dynamical system $(X,f)$ is \textit{minimal} if $X$ is a minimal set for $f$. It is known that $(X,f)$ is minimal if and only if every $x\in X$ has dense orbit or, equivalently, $\omega_f (x) =X$ for each $x \in X$. 

We say that a system $(X, f)$ is 
\begin{enumerate}[(1)]
\item \emph{transitive} if for any nonempty open sets $U,V \subseteq X$  there exists $n>0$ such that $f^n(U) \cap V \neq \emptyset$;
\item \emph{weakly mixing} if $(X\times X, f \times f)$ is transitive;
\item \emph{mixing} if for every nonempty open sets $U,V \subseteq X$,  $f^n(U) \cap V \neq \emptyset$ for sufficiently large $n$;
\item \emph{exact} if for every nonempty open set $U \subset X$ there is $n>0$ such that $f^n(U)=X$.
\end{enumerate}

Observe that $x \in X$ is \emph{a minimal point} for $f$ if for any open and nonempty set $U \subseteq X$ there exists a positive number $m \in \N$ such that for any $i\geq 0$ there exists $j \in [i, i+m]$ such that $f^{j}(x) \in U$. By $N(U,V)$ we denote the set of all $i\in \N$ such that $ f^{i}(U) \cap V \neq \emptyset.$

A point $x \in X$ is regularly recurrent if for any neighborhood $U$ of $x$ there exists $k\in \N$ such that for every $i\in \N$ we have $f^{ik}(x)\in U.$ It is known (see \cite{blo92}) that every regular recurrent point is an element of its own $\omega$-$limit$ set which is minimal. By $I$ we denote the interval $[0,1].$ An \emph{arc} is any topological space homeomorphic to $I$. A \emph{continuum} is a nonempty connected compact metric space. A \emph{dendrite} is a locally connected continuum containing no subset homeomorphic to the circle. A point $x$ of a continuum is an \emph{end point} if for every neighborhood $U$ of $x$ there exists a neighborhood $V$ of $x$ such that $V \subseteq U$ and boundary of $V$ is an one-point set.

Now let us present some standard notation related to symbolic dynamics. Let $\A$ be any finite set (an \emph{alphabet}) and let $\A^*$ denote the set of all finite \emph{words} over $\A$ including the empty word. For any word $w \in \A^*$ we denote by $|w|$ the length of $w$, that is the number of letters which form this word. If $w$ is the empty word then we put $|w|=0$. An \emph{infinite word} is a mapping $w: \N \to \A$, in other words it is an infinite sequence $w_1 w_2 w_3 \dots$ where $w_i \in \A$ for any $i \in \N$. The set of all infinite words over an alphabet $\A$ is denoted by ${\A}^{\N}$. We endow  ${\A}^{\N}$ with the product topology of discrete topology on $\A.$ By $0^k=0 \dots 0$ for $k>1$ we denote word consisting of $k$ zeros  and by $0^{\infty}$ we denote the infinite word $0^{\infty}=000 \dots \in {\A}^{\N}.$ If $x \in \A^\N$ and $i,j \in \N$ with $i \leq j$ then we denote $x_{[i,j)}=x_{i}x_{i+1} \dots x_{j-1}$ (we agree with that $x_{[i,i)}$ is empty word) and given $X \subset {\A}^{\N}$ by $\mathcal{L}(X)$ we denote \emph{the language} of $X$, that is, the set $\mathcal{L}(X):=\{x_{[1,k)}: x \in X, k> 0 \}$.  We write $\mathcal{L}_n(X)$ for the set of all words of length $n$ in $\mathcal{L}(X).$ We say that the word $u \in  \A^*$ \emph{appears} in $z\in {\A}^{\N}$ (the same for $z\in \A^*$) if there are $i \leq j$ such that $j-i=|u|$ and $z_{[i,j)}=u.$ We also write $u \sqsubset z$ and say that $u$ is a \emph{subword} of $z$. If $(u_k)$ is a sequence of words such that $|u_k|\longrightarrow \infty$ then we write $z=\lim_{k\to \infty} u_k$ if the limit $z=\lim_{k\to \infty} u_k 0^\infty$ exists in $\A^\N$. Let $n \in \N$ and $\sigma$ a shift map defined on ${\A}^{\N}$ by $$(\sigma(x))_i=x_{i+1} \text{  for } i\in \N.$$ By  $\Sigma^{+}_{n}$ we denote a dynamical system formed by $ (\{0, \dots, n-1\}^{\mathbb{N}} , \sigma).$ If $S \subset {\A}^{\N}$ is nonempty, closed and $\sigma$-invariant then $S$ together with the restriction $\sigma |_{S} \colon S \to S$ (or even the set $S$) is called a \emph{subshift} of $\Sigma^{+}_{n}.$ Recall that the space ${\A}^{\N}$  is metrizable by the metric  $\rho: {\A}^{\N} \times {\A}^{\N} \to \R$ given for $x,y \in {\A}^{\N}$ by $$ \rho(x,y)= \begin{cases} 2^{-k}, &\text{if } x \neq y, \\ 0, &\text{otherwise} \end{cases} $$ where $k$ is the length of maximal common prefix of $x$ and $y$, that is $k=\max \{ i \geq 1: x_{[1,i)}=y_{[1,i)}\}.$ 

Let $X$ be a subshift. It is well known that $(X,\sigma)$ is
\begin{enumerate}[(1)]
\item \emph{weakly mixing} if  for any $m>0$ and any words $u_1, u_2, v_1,v_2$ from $\mathcal{L}(X)$ with length $m$ there are words $w_1,w_2$ such that $|w_1|=|w_2|$ and such that $u_1w_1v_1, u_2w_2v_2 \in \mathcal{L}(X);$
\item \emph{mixing} if for any $u, v  \in \mathcal{L}(X)$ there is $N>0$ such that for any $n \geq N$ there exists a word $w$ of length $n$ such that $uwv \in \mathcal{L}(X);$
\item \emph{exact} if for any $u \in \mathcal{L}(X)$ there is $N>0$ such that for every $v \in \mathcal{L}(X)$ there exists a word $w$ of length $n \geq N$ such that $uwv \in \mathcal{L}(X)$.
\end{enumerate}

Given $w \in  \A^*$ by $C[w]=\{x \in {\A}^{\N}: x_{[1,|w|+1)}=w\}$ we denote the so-called \emph{cylinder set} of $w$) and by $C_{X}[w]=C[w]\cap X$ we denote \emph{trace of cylinder set} $C[w]$ on $X \subseteq {\A}^{\N}$. 
The collection of all cylinder sets form a basis of the topology of ${\A}^{\N}$.

\begin{defn}[Number of occurrences]
Let $X$ be a shift space over $\A$. For every symbol $a \in \A$ and word $x \in {\A}^*$ we define \emph{number of occurrences $\|x\|_a$ of the symbol $a$ in $x.$}
Let $x=x_1\dots x_k \in \mathcal{L}(X)$ and let $\|x\|_a$ denote the number of $a$'s in $x$, that is
$$\|x\|_a=|\{1\leq j \leq k: x_j=a\}|.$$
\end{defn}

For any $P \subseteq \N$ define
$$ \Sigma_P=\{s \in \Sigma^{+}_{2}: (s_i=s_j=1) \Longrightarrow |i-j|\in P \cup \{0\}\}$$
which is a subshift. We will call a subshift defined in this way the \emph{spacing shift} since it restricts the \emph{spacings} between 1's (see \cite{lau72}). Recall a subset $P$ of $\N$ is called \emph{thick (or replete)} if it contains arbitrarily long blocks of consecutive integers. In other words $P$ is \emph{thick} if and only if  
$$(\forall n\in \N)(\exists m \in \N) \{m,m+1,\dots,m+n\} \subseteq P.$$ We recall that $\Sigma_P$ is weakly mixing iff $P$ is thick (see \cite{lau72} or \cite{ban13}).
\begin{defn}[Densities]
Let $A \subset \N$ be a subset of the positive integers.\\
Its \emph{upper density} is defined
$$\bar{d}(A)=\limsup_{n \to \infty} \frac{\left|A \cap \{1, 2, \dots, n\}\right|}{n}.$$
Its \emph{lower density} is defined
$$\underline{d}(A)=\liminf_{n \to \infty} \frac{\left|A \cap \{1, 2, \dots, n\}\right|}{n}.$$
Its \emph{density} is defined 
$$d(A)=\lim_{n \to \infty} \frac{\left|A \cap \{1, 2, \dots, n\}\right|}{n}.$$
\end{defn}

We restate a version of Mycielski's theorem (\cite{myc64}, Theorem 1) that will play a crucial role in the proof of our Theorem \ref{thm:omega3} .
\begin{defn}[Cantor set]
Let $X$ be a metric space. The set $X$ is a \emph{Cantor set} if it is non-empty, compact, totally disconnected and has no isolated points.
\end{defn}

\begin{defn}[Mycielski set]
Let $X$ be a complete metric space. We call $S \subseteq X$ a $Mycielski$ $set$ if it has the form $S=\bigcup_{j=1}^{\infty} C_j$ with $C_j$ a Cantor set for every $j.$ 
\end{defn}

\begin{thm}[Mycielski]
	Let $X$ be a perfect complete metric space and for each $n \in \N$ let $R_n \subset X^n$ be a residual subset of $X^n.$  Then there is a dense Mycielski set $S \subset X$ such that $(x_1,x_2, \dots, x_n) \in R_n$ for any $n \in \N$ and any pairwise distinct points $x_1,x_2, \dots,x_n \in S$.
\end{thm}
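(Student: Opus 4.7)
The plan is a standard tree-style Baire-category construction. Write each residual $R_n$ as $R_n = \bigcap_{m \in \N} U_{n,m}$ with $U_{n,m}$ open and dense in $X^n$. Fix a countable base $\{V_k\}_{k \in \N}$ of nonempty open subsets of $X$. I aim to produce, for each $k$, a Cantor set $C_k \subset V_k$; then $S = \bigcup_k C_k$ is automatically dense in $X$ and, being a countable union of Cantor sets, is a Mycielski set.

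For each $k$, I realize $C_k$ as the limit of a Cantor scheme of nonempty open sets $W^k_s$ indexed by $s \in \{0,1\}^*$: start with $W^k_\emptyset \subset V_k$, and at each refinement step arrange that $\overline{W^k_{s0}}$ and $\overline{W^k_{s1}}$ are disjoint subsets of $W^k_s$ with $\diam(W^k_s) \to 0$ as $|s| \to \infty$. Since $X$ is complete and perfect, $C_k = \bigcap_{j \geq 0} \bigcup_{|s|=j} \overline{W^k_s}$ is a Cantor set sitting inside $V_k$.

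The additional property to enforce during the construction is the product condition: for every $n$, every $m$, and every tuple of pairwise distinct labels $(k_1, s_1), \dots, (k_n, s_n)$ with $|s_1| = \dots = |s_n|$, we have $W^{k_1}_{s_1} \times \cdots \times W^{k_n}_{s_n} \subset U_{n,m}$. This is achievable because $U_{n,m}$ is open and dense in $X^n$: starting from any nonempty product of the currently chosen open sets, I can shrink each factor to a nonempty open subset whose product lies inside $U_{n,m}$. Since there are only countably many such constraints, I handle them one at a time via a diagonal enumeration, each step refining the currently active open sets to smaller nonempty open subsets. Crucially, further shrinking preserves containment in any fixed open set, so no previously satisfied constraint is destroyed.

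To finish, suppose $x_1, \dots, x_n \in S$ are pairwise distinct, with $x_i \in C_{k_i}$. For $j$ sufficiently large, each $x_i$ lies in a unique $W^{k_i}_{s_i}$ with $|s_i|=j$, and the pairs $(k_i, s_i)$ are pairwise distinct: points from different $C_{k_i}$ have different $k$-labels, and two points in the same $C_k$ have branches through the tree that eventually separate. Hence $(x_1, \dots, x_n) \in W^{k_1}_{s_1} \times \cdots \times W^{k_n}_{s_n} \subset U_{n,m}$ for every $m$, so $(x_1, \dots, x_n) \in R_n$. The main obstacle I anticipate is the bookkeeping of the diagonalization — each refinement step must be organized so that every open set at the current level remains nonempty, which is why one processes a single constraint at a time and only shrinks the finitely many factors it involves.
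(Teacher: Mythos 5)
The paper itself gives no proof of this statement: it is restated verbatim as Theorem~1 of \cite{myc64} and used as a black box, so there is no internal argument to compare yours against. What you have written is the standard Cantor--scheme (Kuratowski--Mycielski) proof, and its overall architecture is the right one: a scheme of open sets with disjoint closures and shrinking diameters inside each basic open set $V_k$, completeness and perfectness to extract Cantor sets, and density plus openness of the sets $U_{n,m}$ to push finite products inside them by shrinking.

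One step does need repair. The invariant you propose to enforce --- that for \emph{every} level $j$ and \emph{every} tuple of pairwise distinct labels $(k_1,s_1),\dots,(k_n,s_n)$ with $|s_1|=\dots=|s_n|=j$ the product $W^{k_1}_{s_1}\times\cdots\times W^{k_n}_{s_n}$ lies in $U_{n,m}$ --- is not achievable as literally stated: since $k$ ranges over all of $\N$, a fixed node $(k,s)$ participates in infinitely many such constraints (one for each choice of companion nodes taken from the other schemes), so handling them ``one at a time'' forces that node to be shrunk infinitely often, and the resulting intersection need not be open, which destroys the Cantor scheme. The standard fix, which your verification step already essentially accommodates, is to stagger the construction: at stage $t$ only the schemes for $k\le t$ are active, each with a finite frontier of nodes, and one imposes the product condition only for tuples drawn from that finite frontier and only for indices $n,m\le t$. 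Then every node is shrunk finitely often, and for fixed pairwise distinct $x_1,\dots,x_n\in S$ and fixed $m$ one chooses a stage $t\ge\max(n,m,k_1,\dots,k_n)$ at which the points lie in distinct frontier nodes; the level $j$ in your last paragraph must therefore depend on $m$ as well as on the points, not be chosen once ``for every $m$''. A minor further remark: your appeal to a countable base tacitly assumes $X$ is separable; this is harmless, both because a dense $\sigma$-compact set can only exist in a separable space and because the space to which the paper applies the theorem, $\Sigma^+_2$, is compact metric.
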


\section{Definition of chaos} \label{sec:chaos}
In the following, we provide definitions of some chaotic behavior of maps.
\subsection{Topological chaos}
Let $\eps>0$ and $n \in \N.$ A set $A \subseteq X$ is $(f,n,\eps)$--\emph{separated} if for each $x,y \in A$ with $x \neq y$ there is an integer $0\leq i <n$, such that $\rho(f^i(x), f^i(y))>\eps$. Let $s(f,n,\eps)$ denote the maximal cardinality of an $(f,n,\eps)$--separated set. The \emph{topological entropy} of $f$ is $$h(f)=\lim_{\eps \to 0^+} \limsup_{n \to \infty} \frac{1}{n} \log s(f,n,\eps) \in [0,\infty]=[0,\infty) \cup \{\infty\}.$$ 
We say that $f$ is \emph{topologically chaotic} (abbreviated PTE) if $f$ has positive topological entropy (see \cite{walt82, down11}).

\subsection{Li-Yorke chaos}
A set $S \subseteq X$ is called \emph{LY--scrambled} for $f$, if it contains at least two points and for any  $x,y \in S$ with $x \neq y$, we have
$$\liminf_{n \to \infty} \rho(f^n(x), f^n(y))=0$$ and 
$$\limsup_{n \to \infty} \rho(f^n(x), f^n(y))>0.$$ 
We say that $f \colon X \to X$ is  $\text{LY}$ chaotic if there exists an uncountable LY--scrambled set  (see \cite{liyor75}).

\subsection{\texorpdfstring{$\omega$-chaos}{omega}}

A set $S \subseteq X$ is called $\omega$-scrambled for $f$ if it contains at least two points and for any $x,y \in S$ with $x\neq y,$ we have
 \begin{enumerate}[(i)]
\item $\omega_f (x) \setminus \omega_f (y)$ is uncountable,
\item $\omega_f (x) \cap \omega_f (y)$ is nonempty,
\item $\omega_f (x) $ is not contained in the set of periodic points.
\end{enumerate} 

We say that $f$ is $\omega$-$chaotic$ if there is an uncoutable  $\omega$-scrambled set for $f$ (see \cite{li93}).

\subsection{Distributional chaos}
This type of chaos was introduced in \cite{sch94}.  Given $f$, $x,y \in X$ and a positive integer $n$, define the distribution function $F^{(n)}_{xy} \colon  (0, diamX] \to [0,1]$ by  
$$F^{(n)}_{xy}(t)=\frac{1}{n} \left| \{0\leq i <n \colon \rho \left( f^i(x),f^i(y)\right)<t\} \right|.$$
Then $F^{(n)}_{xy}$ is a left-continuous nondecreasing function. We define the \emph{lower distribution function} $F_{xy}$ and the \emph{upper distribution function}  $F_{xy}^{*}$  generated by $f$, $x$ and $y$ by
$$F_{xy}(t)=\liminf_{n \to \infty} F^{(n)}_{xy}(t)$$ and
$$F_{xy}^{*}(t)=\limsup_{n \to \infty} F^{(n)}_{xy}(t).$$

We extend $F_{xy}$ and $F_{xy}^{*}$ to the whole real line by setting  $F_{xy}(t)= F_{xy}^{*}(t)=0$ for $t \leq 0$ and $F_{xy}(t)=F_{xy}^{*}(t)=1$ for $t$ which is strictly larger than the diameter of $X$. Clearly, $F_{xy}(t)\leq F_{xy}^{*}(t)$ for every $t \in \R.$
We say that a pair $x,y \in X$ is: 
\begin{enumerate}[(DC1):]
\item if $F_{xy}^{*}(t)=1$ for all $t>0$ and there is $s>0$ such that $F_{xy}(s)=0$,
\item if $F_{xy}^{*}(t)=1$ for all $t>0$ and there is $s>0$ such that $F_{xy}(s)<1$,
\item if there are $a<b$ such that $F_{xy}^{*}(t)>F_{xy}(t)$ for every $t \in (a,b)$.
\end{enumerate}

A set $S \subseteq X$ is  \emph{distributionally chaotic of type 1, 2, 3}  for $f$, if it contains at least two points and for any $x,y \in S$ with $x \neq y$ and the pair $(x,y)$ satisfies the condition (DC1), (DC2), (DC3) respectively.
If there is an uncountable a distributionally chaotic set of type 1, 2, 3 for $f$, then we say that $f$ exhibits \emph{distributional chaos of type 1, 2, 3}, briefly, DC1-chaotic (DC2-chaotic, DC3-chaotic respectively). 
Note that the weaker notions than DC1 distributional chaos denoted by DC2 and DC3 were introduced by Sm\'{\i}tal and ~\v{S}tef\'ankov\'a (see \cite{smiste03}).

\section{The Gehman Dendrite}\label{sec:lem}
Let us recall the construction of a continuous dendrite map from \cite{kokor11}. Let $G$ be the Gehman dendrite (see \cite{geh25}). It is well-known that the Gehman dendrite can be written as the closure of the union of countably many arcs in $\R^{2}$ that is $B_0=[p,p_0], B_1=[p,p_1]$, and for every $n \in \N$, $B_{i_1 i_2  \dots i_{n+1}}=[p_{i_1 i_2 \dots i_n}, p_{i_1 i_2 \dots  i_{n+1}}]$ where every $i_k$ is either 0 or 1. Let $E$ denote the set of end points of $G$. With every point $x \in E$ we can uniquely associate a sequence of zeros and ones $i_1 i_2 i_3 \dots$ in such way that the limit of the codes of the arcs converging to the point.
 
 We define a continuous map $g$ on a dendrite $G$ in the following way. Let $g(B_0)=g(B_1)=\{p\}$. For every $i_1,i_2, \dots, i_n$, let $g |_{B_{i_1 i_2  \dots  i_n}}: B_{i_1 i_2  \dots  i_n} \to B_{i_2 i_3  \dots i_n}$ be a homeomorphism such that $g(p_{i_1 i_2  \dots  i_n})=p_{i_2 i_3  \dots  i_n}$, and let $g$ act on $E$ as the shift map on the space $\Sigma_{2}^+$.
Let $X$ be a closed $g$-invariant subset of $E$. Denote $$D_X= \bigcup_{x_{\xi} \in X} [x_{\xi},p]$$ and $$f=g |_{D_X}.$$ 
The proof of the following lemma is clear from the construction, so we omit it.
\begin{lem}\label{lem:subdendrite}
If $X\subset \Sigma^{+}_{n}$ is a subshift then the set $D_X$ is an $f$-invariant subdendrite of the Gehman dendrite $G$.

\end{lem}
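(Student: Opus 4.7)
The plan is to split the claim into two parts: (a) $D_X$ is a subcontinuum of $G$, hence automatically a subdendrite since every subcontinuum of a dendrite is a dendrite; (b) $D_X$ is $f$-invariant, i.e., $g(D_X)\subseteq D_X$. Note that the natural coding here is binary, so I interpret $X\subseteq \Sigma_2^+$ as a subshift of the shift whose points correspond to endpoints of $G$.

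For connectedness, I observe that every constituent arc $[x_\xi,p]$ with $x_\xi\in X$ contains the central point $p$, so $D_X$ is a union of arcs all sharing $p$ and is therefore path-connected through $p$. For compactness, I would describe $D_X$ combinatorially as $D_X = T \cup X$, where $T$ is the subtree $T=\bigcup\{B_{w}: w\in\{0,1\}^* \text{ is a prefix of some } \xi\in X\}$, since the arc $[x_\xi,p]$ is exactly $\{p\}\cup\bigcup_{n\ge 1} B_{\xi_1\cdots\xi_n}\cup\{x_\xi\}$. To show $D_X$ is closed in $G$, I take a convergent sequence $y_m\to y$ with $y_m\in D_X$. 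If $y$ lies in the interior of some finite arc $B_w$, then cofinitely many $y_m$ also lie in $B_w$, which forces $w$ to be a prefix of some element of $X$, so $B_w\subseteq D_X$ and $y\in D_X$. If instead $y\in E$ corresponds to some infinite word $\xi\in\Sigma_2^+$, then every finite prefix $\xi_1\cdots\xi_n$ of $\xi$ must be a prefix of some element of $X$ (otherwise a neighborhood of $y$ would contain no $y_m$); since $X$ is closed in $\Sigma_2^+$ we conclude $\xi\in X$, and hence $y\in D_X$. So $D_X$ is compact and connected, therefore a subcontinuum of $G$, and therefore a subdendrite.

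For invariance, the dynamics on $G$ is prefix-shifting: on each arc $B_{i_1\cdots i_n}$ the map $g$ is a homeomorphism onto $B_{i_2\cdots i_n}$, on $E$ it is the shift $\sigma$, and $g(B_0)=g(B_1)=\{p\}$ so in particular $g(p)=p$. Thus for any prefix $w=i_1\cdots i_n$ of some $\xi\in X$, $g(B_w)=B_{\sigma(w)}$, and $\sigma(w)$ is a prefix of $\sigma(\xi)\in X$ because $X$ is $\sigma$-invariant as a subshift. Also $g(X)\subseteq X$ and $g(p)=p$. Together with the description $D_X=T\cup X\cup\{p\}$, this gives $g(D_X)\subseteq D_X$, so $f=g|_{D_X}$ is well defined as a self-map of $D_X$.

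The only delicate step is the compactness argument, and the key input there is precisely the closedness of $X$ in $\Sigma_2^+$, which allows endpoints arising as limits of points of $D_X$ to be identified as elements of $X$; everything else follows directly from the prefix-coded structure of $G$ and the shift-equivariance of $g$ on codes.
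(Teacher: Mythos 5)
Your proof is correct. The paper in fact omits the proof of this lemma entirely, stating that it ``is clear from the construction,'' so there is no argument of record to compare against; your write-up supplies exactly the details being taken for granted: connectedness of $D_X$ through the common point $p$, closedness of $D_X$ deduced from the closedness of $X$ in $\Sigma_2^+$ via the prefix description $[x_\xi,p]=\{p\}\cup\bigcup_{n}B_{\xi_1\cdots\xi_n}\cup\{x_\xi\}$, the standard fact that every subcontinuum of a dendrite is a dendrite, and invariance from the shift-equivariance of $g$ on codes together with $g(p)=p$. The only point worth tightening is the case split in your closedness argument: a limit $y$ of points of $D_X$ need not lie in the arc-interior of some $B_w$ or in $E$; it could be a branch point $p_w$ (or the root $p$). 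That case is disposed of by the same reasoning, since points of $D_X$ close to $p_w$ must lie in $B_w\cup B_{w0}\cup B_{w1}$, which forces $w$ (hence $p_w$) to sit on some arc $[x_\xi,p]$ with $\xi\in X$. With that small addition the argument is complete.
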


\begin{lem}\label{lem:Geh}
If $X$ is a closed and nonempty subset of  $\Sigma_{2}^+$ without isolated points, then the set $D_X= \bigcup_{x_{\xi} \in X} [x_{\xi},p] \subset G$ is homeomorphic with the Gehman dendrite. 
\end{lem}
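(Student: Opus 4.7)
The plan is to construct an explicit homeomorphism $\Phi \colon G \to D_X$ by matching the full binary tree structure of $G$ with the coarser branching tree of $D_X$ determined by $X$. Throughout I will write $T_X = \{w \in \{0,1\}^* \colon w \text{ is a prefix of some } x \in X\}$ and $T_X' = \{w \in T_X \colon w0,\, w1 \in T_X\}$ (the \emph{branching prefixes}).

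First I identify the topological pieces of $D_X$. For $|w|\ge 1$ the point $p_w$ lies in $D_X$ iff $w \in T_X$; at such a point the up-arc $B_w$ is always in $D_X$, while $B_{wi}$ lies in $D_X$ iff $wi \in T_X$. Consequently $p_w$ can never be an endpoint of $D_X$, and each $p_w$ with $w \in T_X'$ is a branch point of order exactly $3$. This shows that $\End(D_X)$ coincides with $X$ (identifying $X \subseteq \Sigma_2^+$ with the corresponding endpoints in $E \subseteq G$), the set of branch points is $\{p_w \colon w \in T_X'\}$, and each is trivalent.

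Next I exploit the no-isolated-points hypothesis to show that $T_X'$ is rich enough. For any $w \in T_X$, the cylinder trace $\{x \in X \colon w \text{ is a prefix of } x\}$ is a nonempty open subset of the Cantor set $X$; if it were a singleton, that point would be isolated in $X$, a contradiction. Hence it contains at least two elements, so some extension of $w$ belongs to $T_X'$. Iterating, every element of $X$ has infinitely many branching prefixes, and from any $w \in T_X'$ and each $i \in \{0,1\}$ there is a shortest extension of $wi$ again lying in $T_X'$.

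This lets me recursively define a bijection $\phi \colon \{0,1\}^* \to T_X'$ by choosing $\phi(\emptyset)$ to be the shortest element of $T_X'$ and, given $\phi(v)=u$, letting $\phi(vi)$ be the shortest extension of $ui$ in $T_X'$. Passing to limits along infinite words, $\phi$ extends to a continuous bijection $\Sigma_2^+ \to X$, hence a homeomorphism between two Cantor sets. I then promote $\phi$ to a map $\Phi \colon G \to D_X$ by sending each arc $B_{vi}$ (joining consecutive branch points $p_v, p_{vi}$ of $G$) homeomorphically onto the concatenation of arcs in $D_X$ joining $p_{\phi(v)}$ to $p_{\phi(vi)}$. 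The principal technical obstacle is continuity of $\Phi$ at the endpoints: one must verify that as $|v|\to\infty$ the images of the arcs at level $|v|$ have diameter tending to $0$. This follows because $|\phi(v)|\to\infty$ with $|v|$ and the arcs $B_w$ contract uniformly with $|w|$ in the Gehman construction, so $\Phi$ is a continuous bijection from the compact space $G$ onto $D_X$, hence a homeomorphism.
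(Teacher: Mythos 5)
Your route is genuinely different from the paper's: the paper merely checks that $D_X$ is a subdendrite all of whose ramification points have order $3$ and whose endpoint set is a Cantor set, and then invokes the characterization of the Gehman dendrite from Theorem 4.1 of Arevalo--Charatonik--Pellicer-Covarrubias--Sim\'on. Your explicit homeomorphism is more self-contained and more informative, at the cost of the bookkeeping with $T_X'$ and the continuity check at the endpoints of $G$; that check is only sketched, but the sketch is essentially right ($|\phi(v)|\geq |v|$, and the subtree of $G$ hanging below $p_w$ has diameter tending to $0$ as $|w|\to\infty$).

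There is, however, a genuine gap at the root, and it is not cosmetic. You assert that $\End(D_X)=X$ and that $\Phi$ maps $G$ \emph{onto} $D_X$, but both claims fail when $X$ is contained in a single cylinder $C[0]$ or $C[1]$, i.e.\ when the shortest branching prefix $u_0=\phi(\emptyset)$ is nonempty. In that case every arc $[x_{\xi},p]$ contains the nondegenerate arc $[p,p_{u_0}]$, whose points (other than $p_{u_0}$ itself) lie in none of the arcs $[p_{\phi(v)},p_{\phi(vi)}]$, so $\Phi$ is not surjective; worse, $p$ then has order $1$ in $D_X$ and is an endpoint of $D_X$ isolated in $\End(D_X)$. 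Since $\End(G)$ is perfect and endpoints are preserved by homeomorphisms, $D_X$ is then genuinely \emph{not} homeomorphic to $G$: for $X=\set{0w: w\in \Sigma^{+}_{2}}$ one gets a copy of $G$ with a free arc attached at $p$. So the argument (and indeed the statement) needs the additional hypothesis that $X$ meets both $C[0]$ and $C[1]$, equivalently $\emptyset\in T_X'$; under that hypothesis $\phi(\emptyset)=\emptyset$, the point $p_{\phi(\emptyset)}=p$ has order $2$ on both sides (so it is not a trivalent branch point, contrary to your blanket claim for $w\in T_X'$), and your construction goes through. The paper's own one-line proof has the same blind spot, and the hypothesis holds in the only application ($X=\Sigma_P$ contains $0^{\infty}$ and $10^{\infty}$). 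A smaller point worth recording: the ``shortest'' elements you choose are unique because the longest common prefix of two incomparable elements of $T_X'$ again lies in $T_X'$, and this uniqueness is what makes $\phi$ well defined and surjective onto $T_X'$.
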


\begin{proof}
Let $X$ be nonempty subset of $\Sigma_{2}^+$ homeomorphic to the Cantor set. Then the set $D_X= \bigcup_{x_{\xi} \in X} [x_{\xi},p]$ is subdendrite of $G.$ Furthermore, every ramification point of $D_X$ has order 3 and the set of endpoints is homeomorphic with the Cantor set $X$. Using Theorem 4.1 in \cite{are01} such a dendrite is homeomorphic with the Gehman dendrite.
\end{proof}

\section{\texorpdfstring{Uncountable $\omega$-scrambled set without DC3 pairs on dendrite}{omega}}\label{sec:constr}
\begin{lem}\label{lem:Sigma}
There is a Cantor set $\Sigma \subset \Sigma^{+}_{2}$ such that for any $n \geq 2$ and any distinct points $x^{(1)}, x^{(2)}, \dots, x^{(n)} \in \Sigma$ and any $\{i_1, i_2, \dots, i_k\} \subset \{1,2, \dots, n\}$ where $k \in\{1,2, \dots, n\}$, there is $j>0$ such that $$x^{(i)}_j=1 \text{ for } i\in \{i_1, i_2, \dots, i_k\}$$
and $$x^{(i)}_j=0 \text{ for } i \notin \{i_1, i_2, \dots, i_k\}.$$
\end{lem}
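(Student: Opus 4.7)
My plan is to produce $\Sigma$ as one of the Cantor components of a Mycielski set given by the Mycielski theorem recalled in Section \ref{sec:def}, applied to the perfect, complete metric space $X=\Sigma^{+}_{2}$. The residual subsets $R_n\subseteq X^n$ will encode exactly the required pattern-realisation property for every nonempty subset of every finite collection of indices.

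Fix $n\geq 2$ and, for each nonempty $T\subseteq\{1,\dots,n\}$, set
\[
U^{(n)}_T=\bigl\{(x^{(1)},\dots,x^{(n)})\in (\Sigma^{+}_{2})^n : \exists\, j\geq 1,\ x^{(i)}_j=1 \text{ for } i\in T \text{ and } x^{(i)}_j=0 \text{ for } i\notin T\bigr\}.
\]
For each fixed $j$, the displayed condition prescribes the $j$-th term of every sequence $x^{(i)}$, so it defines a clopen subset of the product $(\Sigma^{+}_{2})^n$; therefore $U^{(n)}_T$ is open as a countable union over $j$. To see that $U^{(n)}_T$ is dense, given $(y^{(1)},\dots,y^{(n)})\in (\Sigma^{+}_{2})^n$ and $\eps>0$, choose $N$ with $2^{-N}<\eps$ and let $x^{(i)}$ agree with $y^{(i)}$ on every coordinate except the $N$-th, at which we set $x^{(i)}_N=1$ if $i\in T$ and $x^{(i)}_N=0$ otherwise. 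Then $(x^{(1)},\dots,x^{(n)})\in U^{(n)}_T$ and is $\eps$-close to the original tuple. Hence $R_n:=\bigcap_{\emptyset\ne T\subseteq\{1,\dots,n\}}U^{(n)}_T$ is a finite intersection of dense open sets, hence a dense open (in particular residual) subset of $(\Sigma^{+}_{2})^n$.

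The Mycielski theorem then furnishes a dense Mycielski set $M=\bigcup_{j=1}^{\infty}C_j\subseteq \Sigma^{+}_{2}$, with each $C_j$ a Cantor set, such that every tuple of pairwise distinct points of $M$ lies in the corresponding $R_n$. Taking $\Sigma:=C_1$ yields a Cantor subset of $\Sigma^{+}_{2}$ with the required property: any $n$ distinct points of $\Sigma$ belong to $M$, hence realise each nonempty pattern over $\{1,\dots,n\}$ at some coordinate $j$, which is exactly the conclusion of the lemma. I do not anticipate a serious obstacle here; the one step of substance is the density of $U^{(n)}_T$, which reduces to the elementary remark that altering a single far-out coordinate in $\Sigma^{+}_{2}$ is an arbitrarily small perturbation.
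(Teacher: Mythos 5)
Your proof is correct and follows essentially the same route as the paper: encode the pattern-realisation property as a countable family of open dense (hence residual) relations $R_n$ in $(\Sigma^{+}_{2})^n$ and invoke Mycielski's theorem to extract a Cantor set from the resulting Mycielski set. The only cosmetic difference is in the density step, where you flip a single far-out coordinate of each sequence, while the paper pads the given cylinder words to a common length and then appends the pattern; both arguments are valid since $\Sigma^{+}_{2}$ is the full shift.
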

\begin{proof}
Fix $n\geq 2$, $1\leq k \leq n$ and indices $\{i_1, i_2, \dots, i_k\} \subset \{1,2, \dots, n\}$. Define  $R_n^{\{i_1, i_2, \dots, i_k\}} \subset \left( \Sigma^{+}_{2}\right)^n$ by 
\begin{align*}
R_n^{\{i_1, i_2, \dots, i_k\}}= \Big\{ &\left(x^{(1)}, x^{(2)}, \dots, x^{(n)}\right)\in \left( \Sigma^{+}_{2}\right)^n \colon  \exists j\in \N 
\text{ such that }
\\&x^{(i)}_j=1 \text{ for } i\in \{i_1, i_2, \dots, i_k\} \text{ and } x^{(i)}_j=0 \text{ for } i \notin \{i_1, i_2, \dots, i_k\} \Big\}. 
\end{align*}
We claim that the set $R_n^{\{i_1, i_2, \dots, i_k\}}$ is open and dense  in the product space $(\Sigma_2^+)^n.$
We endow $(\Sigma_2^+)^n$ with the maximum metric $\rho_n$, i.e. $$\rho_n \left(\left(x^{(1)},\ldots,x^{(n)}\right), \left(y^{(1)},\ldots,y^{(n)}\right)\right)=\max_{i=1,\ldots,n}\rho \left(x^{(i)},y^{(i)}\right).$$
Observe that $R_n^{\{i_1, i_2, \dots, i_k\}}$ is open, since for every $j\in \N$ there is $\eps>0$ such that if $\rho_n \left((x^{(1)},\ldots,x^{(n)}), (y^{(1)},\ldots, y^{(n)})\right)<\eps$
then $x^{(i)}_j=y^{(i)}_j$ for $i=1,\ldots, n$. 
It remains to prove that $R_n^{\{i_1, i_2, \dots, i_k\}}$ is also dense. Fix nonempty words $w_i$ for $i=1,\dots,n$ and put $v^{(i)}=w^{(i)} w^{(i+1)}\ldots w^{(n)} w^{(1)}\ldots w^{(i-1)}.$ Note that $|v^{(1)}|=|v^{(2)}|=\ldots=|v^{(n)}|.$ We define 
$$
u^{(i)}=\begin{cases}
v^{(i)} 1 0^\infty, &\text{ if }i\in \{i_1, i_2, \dots, i_k\},\\
v^{(i)} 0^\infty, &\text{ otherwise.}
\end{cases}
$$
Clearly,
$$
\left(u^{(1)},\ldots, u^{(n)}\right)\in R_n^{\{i_1, i_2, \dots, i_k\}} \cap \left(C[w^{(1)}]\times \dots \times C[w^{(n)}]\right),
$$
which proves that $R_n^{\{i_1, i_2, \dots, i_k\}}$ is dense.
It follows that for each $n$ and $\{i_1,\dots,i_k\}\subseteq \{1,\dots,n\}$ the set $R_n^{\{i_1, i_2, \dots, i_k\}}$ is residual in $(\Sigma_2^+)^n$.
Given $n\geq 2$ let  $\Gamma_n$ be the set of all finite and nonempty subsets of $\{1,\ldots,n\}$.
Then the set 
$$
R_n=\bigcap_{A\in \Gamma_n}R_n^A
$$
is also open and dense, hence is also residual in $(\Sigma_2^+)^n.$ We have constructed a sequence of residual relations, therefore by Mycielski Theorem there is a Mycielski set $M\subset \Sigma_2^+$ which in particular contains a Cantor set $\Sigma \subset M$ such that for any $n\geq 2$ and for any
distinct $x^{(1)},\ldots, x^{(n)}\in \Sigma$ and any $\{i_1, \dots, i_k\} \subseteq \{1, \dots, n\}$ for some $1\leq k \leq n$ we have $\left(x^{(1)},\ldots,x^{(n)}\right)\in R_n^{\{i_1, i_2, \dots, i_k\}}$, that is there is $j> 0$ such that  $x^{(i)}_j=1 \text{ for } i\in \{i_1, i_2, \dots, i_k\}$ and $x^{(i)}_j=0 \text{ for } i \notin \{i_1, i_2, \dots, i_k\}.$
The proof is completed.
\end{proof}
Now we present simple, yet useful lemma which is probably known.
\begin{lem}\label{lem:thick}
For every thick set $P$ there are thick sets $P_i$ such that $P=\bigcup_{i\in \N} P_i$ and $P_i \cap P_j = \emptyset$ provided that $i \neq j$.
\end{lem}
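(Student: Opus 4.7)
The plan is to first establish the key auxiliary fact that a thick set $P$ must contain, for each $n\in\N$, infinitely many pairwise disjoint blocks of $n$ consecutive integers (and in fact such blocks starting arbitrarily far out). If this failed, all length-$n$ blocks of $P$ would be confined to some interval $[1,M]$; but by thickness $P$ contains a block of length $M+n+1$, say $[m,m+M+n]$, and then $[\max(m,M)+1,\max(m,M)+n]$ is a length-$n$ block in $P$ starting strictly after $M$, a contradiction.

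With that in hand, I enumerate $\N\times\N$ as a sequence $(i_k,n_k)_{k\in\N}$ such that for every $i\in\N$ the set $\{k:i_k=i\}$ is infinite and $\{n_k:i_k=i\}=\N$; in other words, each label $i$ is asked to receive blocks of every length infinitely often. I then construct pairwise disjoint blocks $B_k\subseteq P$ with $|B_k|=n_k$ greedily: at stage $k$ the previously chosen blocks $B_1,\dots,B_{k-1}$ are contained in some bounded initial segment of $\N$, and by the auxiliary fact $P$ still contains a block of length $n_k$ beyond that segment, which I take as $B_k$.

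Now set $Q_i=\bigcup_{k:\,i_k=i} B_k$ for each $i\in\N$. By construction the $Q_i$ are pairwise disjoint subsets of $P$, and each $Q_i$ contains blocks of every length (coming from the infinitely many indices $k$ with $i_k=i$ and $n_k$ ranging over all of $\N$), hence each $Q_i$ is thick. Finally, to upgrade this to a partition of $P$, I absorb the residue: put $P_i=Q_i$ for $i\geq 2$ and $P_1=Q_1\cup\bigl(P\setminus\bigcup_{i\geq 2}Q_i\bigr)$. Enlarging a thick set keeps it thick, so every $P_i$ remains thick, the $P_i$'s are pairwise disjoint, and $\bigcup_{i\in\N}P_i=P$.

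The only real obstacle is the auxiliary step about infinitely many disjoint blocks of each length, and that follows almost directly from the definition of thickness; the remaining bookkeeping is elementary.
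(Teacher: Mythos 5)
Your proof is correct and follows essentially the same strategy as the paper's: select pairwise disjoint blocks of unbounded lengths inside $P$ (using that thickness yields blocks arbitrarily far out), distribute them among countably many labels via an enumeration of $\N\times\N$ so that each label receives blocks of unbounded length, and absorb the leftover residue of $P$ into one of the resulting thick sets. The only difference is cosmetic: you spell out the auxiliary fact that blocks of each length can be found beyond any bound, which the paper uses implicitly when it assumes $j_{n+1}>j_n+n$.
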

\begin{proof}
For any integer $n\geq 2$ take $j_n$ such that $Q_n=\{j_n, j_n +1,\ldots, j_n+n\}\subset P$. We may assume that $j_{n+1}>j_n+n$.
Put $Q_1=P\setminus \bigcup_{n=2}^\infty Q_n$. Take any bijection $F\colon \N \to \N \times \N$. Let $I_j=\{n \in \N: F(n) \in \{j\} \times \N\}$. For each $j\in \N$ set $P_j=\bigcup_{i\in I_j}Q_i.$ By the construction each $P_j$ is a thick, the sets  $P_j$'s are pairwise disjoint and $P=\bigcup_{i\in \N} P_i$ which completes the proof. 
\end{proof}

\begin{thm}\label{thm:omega2}
For every thick set $P$	there exists a Cantor set $\Gamma \subset \Sigma_P$
such that for any $n\geq 2$, any distinct points $y^{(1)}, y^{(1)}, \dots, y^{(n)} \in \Gamma $  and any choice of indexes ${\{i_1, \dots, i_k\}}\subset \{1,\ldots,n\}$ the set
\begin{equation}
\bigcap_{i\in\{i_1, \dots, i_k\}} \omega_{\sigma}\left(y^{(i)}\right) \setminus \bigcup_{j\notin\{i_1, \dots, i_k\}} \omega_{\sigma}\left(y^{(j)}\right)\label{eq:strongomega}
\end{equation}  contains an uncountable set $D$ without minimal points.
\end{thm}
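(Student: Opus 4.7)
The plan is to realize $\Gamma$ as the image of a continuous injection from a strengthened variant of the Cantor set in Lemma~\ref{lem:Sigma} into $\Sigma_P$, where the encoding is designed using the thick partition from Lemma~\ref{lem:thick}.

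First, I would partition $P=\bigsqcup_{k\in\N}P_k$ into pairwise disjoint thick subsets via Lemma~\ref{lem:thick}, and strengthen Lemma~\ref{lem:Sigma}: each relation $R_n^{\{i_1,\dots,i_k\}}$ is a countable intersection of its open dense refinements ``witnessing index exceeds every fixed threshold'', hence is $G_\delta$-dense, so Mycielski's theorem yields a Cantor set $\Sigma^{\infty}\subset\Sigma_2^+$ such that for any distinct $x^{(1)},\dots,x^{(n)}\in\Sigma^\infty$ and any nonempty $I\subseteq\{1,\dots,n\}$, the selector set
\[J(x^{(1)},\dots,x^{(n)};I):=\{j\in\N: x^{(i)}_j=1\text{ iff }i\in I\}\]
is infinite. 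Next I would define a continuous injection $\phi\colon \Sigma^{\infty}\to\Sigma_P$ as a block code $\phi(x)=v_1(x_1)v_2(x_2)\cdots$ with slots of increasing lengths $L_j$: $v_j(0)=0^{L_j}$ while $v_j(1)=w_j$ is a marker word whose $1$'s all lie in the channel $P_{k(j)}$ for an index function $k\colon\N\to\N$ with all fibers infinite. The lengths and markers are chosen so that, first, every admissible concatenation $\phi(x)$ lies in $\Sigma_P$, achieved by padding each $w_j$ with sufficient zero-buffers and using the arbitrarily long intervals in each $P_k$ to force cross-block $1$-spacings into $P$; second, along suitable subsequences the markers accumulate to an uncountable Cantor family $\mathcal{Z}\subset\Sigma_P$ of ``spine'' sequences, which is chosen inside the residual set of non-minimal points of $\Sigma_P$ (non-empty because $\Sigma_P$ is weakly mixing but contains the fixed point $0^\infty$). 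Injectivity and continuity of $\phi$ make $\Gamma:=\phi(\Sigma^\infty)$ a Cantor subset of $\Sigma_P$.

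To verify the conclusion, fix distinct $y^{(i)}=\phi(x^{(i)})$ and a nonempty $I$, and set $N_j=L_1+\cdots+L_{j-1}$. For every $j\in J(x^{(1)},\dots,x^{(n)};I)$, the shifted point $\sigma^{N_j}(y^{(i)})$ begins with the marker $w_j$ if $i\in I$ and with $0^{L_j}$ if $i\notin I$. Given any $\zeta\in\mathcal{Z}$, one can extract a subsequence along which $\sigma^{N_j}(y^{(i)})\to\zeta$ for each $i\in I$, placing $\zeta\in\bigcap_{i\in I}\omega_\sigma(y^{(i)})$. For $i\notin I$, the disjointness of the channels $P_{k(j)}$ together with the long zero-prefix of $\sigma^{N_j}(y^{(i)})$ forces any accumulation point of the orbit of $y^{(i)}$ to avoid a fixed neighborhood of $\zeta$, so $\zeta\notin\omega_\sigma(y^{(i)})$. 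Hence $D:=\mathcal{Z}$ is an uncountable subset of~\eqref{eq:strongomega} consisting entirely of non-minimal points.

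The chief obstacle is the block-code engineering: the partition of $P$, the slot lengths $L_j$, the markers $w_j$, and the family $\mathcal{Z}$ of spines must be coordinated so that $\Sigma_P$-admissibility, simultaneous accumulation at every $\zeta\in\mathcal{Z}$ for $i\in I$, and total avoidance of $\mathcal{Z}$ in $\omega_\sigma(y^{(i)})$ for $i\notin I$ all hold at once. The subtlest tension is between admissibility (which forces cross-slot $1$-spacings to lie in $P$, so markers from different channels necessarily interact) and exclusion (which requires that no orbit passage through a sequence of zero-slots accidentally produce a marker-shaped limit point); this is where the disjointness of the thick pieces $P_k$ and the explicit control on the growth of the $L_j$ are essential.
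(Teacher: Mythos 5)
There is a genuine gap at the heart of your verification step, namely the claim that for every $\zeta$ in an \emph{uncountable} family $\mathcal{Z}$ you can extract a subsequence of the selector set $J=J(x^{(1)},\dots,x^{(n)};I)$ along which $\sigma^{N_j}(y^{(i)})\to\zeta$. Since $L_j\to\infty$, any accumulation point of $\left(\sigma^{N_j}(y^{(i)})\right)_{j\in J}$ is determined by the prefixes $w_j$, so what you need is $\mathcal{Z}\subseteq\mathrm{Acc}\left((w_j)_{j\in J}\right)$. But your (strengthened) Mycielski argument only tells you that $J$ is \emph{some} infinite subset of $\N$, with no further structure, so you would need $\mathcal{Z}$ to be contained in the accumulation set of $(w_j)_{j\in J'}$ for \emph{every} infinite $J'\subseteq\N$. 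That intersection contains at most one point: if $\zeta\neq\zeta'$ both belonged to it, choose $m$ with $\zeta_{[1,m)}\neq\zeta'_{[1,m)}$; taking $J'=\{j: (w_j)_{[1,m)}\neq\zeta_{[1,m)}\}$ shows this set must be finite, and symmetrically for $\zeta'$, a contradiction. So a point-valued marker scheme can place at most one point (not an uncountable set) into $\bigcap_{i\in I}\omega_\sigma(y^{(i)})$ uniformly over all admissible tuples. Two smaller problems: your non-minimality claim rests on an unproved assertion that non-minimal points of $\Sigma_P$ form a residual set (weak mixing plus a fixed point does not give this), and your exclusion argument for $i\notin I$ only inspects the times $N_j$ rather than all of $\omega_\sigma(y^{(i)})$.

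The paper's proof repairs exactly this defect by making each ``marker'' carry an entire uncountable subshift rather than a single limit point. Given the disjoint thick decomposition $P=\bigcup_n P_n$, it assigns to each $x\in\Sigma$ the thick set $Q_x=\bigcup_{x_n=1}P_n$ and takes $y=z_x$ to be a transitive point of the weakly mixing spacing shift $\Sigma_{Q_x}$, so that $\omega_\sigma(z_x)=\Sigma_{Q_x}$ exactly. A \emph{single} witness index $j$ from Lemma~\ref{lem:Sigma} (no strengthening to infinitely many witnesses is needed) then gives $P_j\subseteq Q_{x^{(i)}}$ for $i\in I$, hence $\Sigma_{P_j}\subseteq\bigcap_{i\in I}\omega_\sigma(y^{(i)})$, while for $i\notin I$ the disjointness $P_j\cap Q_{x^{(i)}}=\emptyset$ forces $\Sigma_{P_j}\cap\omega_\sigma(y^{(i)})$ to consist of points with at most one symbol $1$, a countable set. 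Uncountability of the weakly mixing shift $\Sigma_{P_j}$ and its proximality (because $P_j$ has thick complement) then yield the uncountable set $D$ without minimal points. If you want to salvage your block-code picture, the $1$-slot for channel $P_{k}$ must contain longer and longer segments of a transitive orbit of $\Sigma_{P_k}$, which is essentially the paper's construction in disguise.
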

\begin{proof}
Using Lemma~\ref{lem:thick} we find a decomposition of $P$ into pairwise disjoint thick sets $P=\bigcup_{i\in \N} P_i$.
Let $\Sigma$ be provided by Lemma~\ref{lem:Sigma}. For every $x\in \Sigma$ denote
$$Q_x=\bigcup_{x_n=1}P_n \subset P.$$
Clearly, each $Q_x$ is thick, since every $x\in \Sigma$ contains at least one symbol $1$.
Thus $Q_x$ defines a weakly mixing spacing shift $\Sigma_x =\Sigma_{Q_x}$ (see \cite{lau72} or \cite{ban13}) and so we can select a point $z_x \in \Sigma_x$ with a dense orbit in $\Sigma_x$.
Denote 
$$
\Gamma=\set{z_x : x\in \Sigma}.
$$
Fix any $n\geq 2$ and any $\{i_1, i_2, \dots, i_k\} \subset \{1,2,\dots,n \}$.
Pick any pairwise distinct points $y_1,\ldots, y_n\in \Gamma$ and let $x^{(1)},x^{(2)}, \dots, x^{(n)} \in \Sigma$  be such that $y^{(i)}=z_{x^{(i)}}$.
Then there is $j>0$ such that $x^{(i)}_j=1$ for $i\in \{i_1, i_2, \dots, i_k\}$ and $x^{(i)}_j=0$ for $i \notin \{i_1, i_2, \dots, i_k\}$. This implies that $P_j \subset Q_{x^{(i)}}$ for each $i\in \{i_1, i_2, \dots, i_k\}$ and $P_j \cap Q_{x^{(i)}}=\emptyset$ for each $i \notin \{i_1, i_2, \dots, i_k\}$.
Then 
$$
\Sigma_{P_j} \subset  \bigcap_{i\in\{i_1, \dots, i_k\}} \omega_{\sigma}(z_{x^{(i)}})=\bigcap_{i\in\{i_1, \dots, i_k\}}\Sigma_{x^{(i)}}.
$$
Furthermore, if $i\not\in  \{i_1, \dots, i_k\}$ then in any point $z\in \Sigma_{P_j}\cap \Sigma_{{x^{(i)}}}$ the symbol $1$ occurs at most once. There are at most countably many such points,
and every weakly mixing spacing shift $\Sigma_{P_j}$ is uncountable. Furthermore $P_j$ has thick complement (e.g. contains $P_{j+1}$ in its complement), hence $\Sigma_{P_j}$
is proximal by Theorem 3.11 from \cite{ban13}. But the only minimal point in a proximal system is the fixed point, hence
$$
\bigcap_{i\in\{i_1, \dots, i_k\}} \omega_{\sigma}\left(y^{(i)}\right) \setminus \bigcup_{j\notin\{i_1, \dots, i_k\}} \omega_{\sigma}\left(y^{(i)}\right)
$$
contains an uncountable set $D$ without minimal points.
\end{proof}

Observe that the set $\Gamma$ in Theorem~\ref{thm:omega2} satisfies the following strong version of $\omega$-chaos (in particular is $\omega$-scrambled):
\begin{enumerate}[(i)]
		\item $\left(\omega_{f}\left(x^{\pi(1)}\right)\cap \ldots \cap  \omega_{f}\left(x^{\pi(n-1)}\right)\right)\setminus \omega_{f}\left(x^{\pi(n)}\right)$ contains an uncountable minimal set
		for any permutation $\pi$ of the set $\set{1,\ldots,n}$,
		\item\label{rem:stronger-omega:2} $\bigcap_{i=1}^n \omega_{f}\left(x^{(i)}\right)$ contains an uncountable minimal set.
\end{enumerate}
In fact, conditions posed in \eqref{eq:strongomega} are among the strongest possible dependences between $n$-tuples of $\omega$-limit sets.

Lets take $P=\bigcup_{i=1}^{\infty} \{4^i, 4^i+1, \dots, 4^i+i-1\}$. It is not hard to see that both sets $P$ and $\N\setminus P$
are thick. Then combining Theorem~\ref{thm:omega2} with Corollary 3.19 and Example 3.20 from \cite{ban13} we obtain the following:
\begin{thm}\label{thm:omega3}
There exists a spacing shift $\Sigma_P$ without DC3 pairs  and a Cantor set $\Gamma \subset \Sigma_P$
	such that for any $n\geq 2$, any distinct points $y^{(1)}, y^{(2)}, \dots, y^{(n)} \in \Gamma $  and any choice of indexes ${\{i_1, \dots, i_k\}}\subset \{1,\ldots,n\}$ the set
$$
	\bigcap_{i\in\{i_1, \dots, i_k\}} \omega_{\sigma}\left(y^{(i)}\right) \setminus \bigcup_{j\notin\{i_1, \dots, i_k\}} \omega_{\sigma}\left(y^{(j)}\right)
$$	
contains an uncountable set $D$ without minimal points.
\end{thm}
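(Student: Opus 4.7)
The plan is to follow the recipe already outlined in the paragraph immediately preceding the theorem: choose the specific set $P=\bigcup_{i=1}^{\infty}\set{4^i,4^i+1,\ldots,4^i+i-1}$, verify that it has the structural properties needed to invoke the two machines we have on hand (Theorem~\ref{thm:omega2} for the $\omega$-scrambling, and Corollary 3.19 / Example 3.20 of \cite{ban13} for the absence of DC3 pairs), and then simply assemble the conclusion.

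First I would verify that $P$ is thick. The $i$-th block $\set{4^i,\ldots,4^i+i-1}$ is a run of $i$ consecutive integers contained in $P$, and since $i$ ranges over all of $\N$, $P$ contains arbitrarily long blocks of consecutive integers, which is the definition of thickness. Next I would check that $\N\setminus P$ is also thick. The $i$-th and $(i+1)$-st blocks of $P$ are separated by the gap $\set{4^i+i,4^i+i+1,\ldots,4^{i+1}-1}$, whose length $4^{i+1}-4^i-i=3\cdot 4^i-i$ tends to infinity; so $\N\setminus P$ contains arbitrarily long runs of consecutive integers as well.

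With both $P$ and its complement thick, the spacing-shift $\Sigma_P$ falls into the regime covered by the results cited from \cite{ban13}: by Example 3.20 and Corollary 3.19 of \cite{ban13}, a spacing shift $\Sigma_P$ for which both $P$ and $\N\setminus P$ are thick contains no DC3 pair. This handles the ``no DC3 pairs'' half of the statement at once, purely by the choice of $P$.

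Finally, since $P$ is thick, Theorem~\ref{thm:omega2} applies to it verbatim and produces a Cantor set $\Gamma\subset\Sigma_P$ with exactly the intersection/difference property asserted in \eqref{eq:strongomega}: for any $n\ge 2$, any distinct $y^{(1)},\ldots,y^{(n)}\in\Gamma$ and any $\set{i_1,\ldots,i_k}\subset\set{1,\ldots,n}$, the set $\bigcap_{i\in\set{i_1,\ldots,i_k}}\omega_\sigma(y^{(i)})\setminus\bigcup_{j\notin\set{i_1,\ldots,i_k}}\omega_\sigma(y^{(j)})$ contains an uncountable subset $D$ with no minimal points. Combined with the previous paragraph, this gives both conclusions of Theorem~\ref{thm:omega3}. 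There is essentially no obstacle beyond the (easy) bookkeeping for thickness; the real work was already done in Theorem~\ref{thm:omega2} and in the cited results of \cite{ban13}, and the only creative ingredient is the ad hoc choice of $P$, which is engineered precisely so that both $P$ and $\N\setminus P$ are thick.
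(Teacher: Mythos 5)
Your proposal is correct and follows essentially the same route as the paper: the paper likewise takes $P=\bigcup_{i=1}^{\infty}\{4^i,\ldots,4^i+i-1\}$, observes that $P$ and $\mathbb{N}\setminus P$ are both thick, and obtains the theorem by combining Theorem~\ref{thm:omega2} with Corollary 3.19 and Example 3.20 of \cite{ban13}. Your explicit verification of the thickness of $\mathbb{N}\setminus P$ via the gap length $3\cdot 4^i-i$ is a welcome bit of detail the paper leaves to the reader.
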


Lets us recall open question formulated by Ko\v{c}an  in \cite{koc12}.
\begin{enumerate}[(1)]
	\item Does the existence of an uncountable $\omega$-scrambled set imply distributional chaos?
	\item Does the existence of an uncountable $\omega$-scrambled set imply existence of an infinite LY--scrambled set?
	 
	\item Does distributional chaos imply the existence of an infinite LY--scrambled set?
\end{enumerate}

By our construction we immediately obtain a negative answer to the first one i.e. there is an uncountable $\omega-$scrambled set which does not imply distributional chaos.

\begin{cor}\label{cor:omega4}
There exists a continuous self-map $f$ of Gehman dendrite such that:
\begin{enumerate}[(1)]
\item\label{cor:omega4:c1} $f$ does not have DC3 pairs,
\item\label{cor:omega4:c2} $f$ has uncountable $\omega$-scrambled set.
\end{enumerate}
\end{cor}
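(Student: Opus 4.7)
The plan is to apply the construction of Section~\ref{sec:lem} directly to the spacing shift produced by Theorem~\ref{thm:omega3}. Set $P=\bigcup_{i=1}^{\infty}\{4^i,4^i+1,\dots,4^i+i-1\}$ and let $\Sigma_P$ together with the Cantor set $\Gamma\subset\Sigma_P$ be as supplied by Theorem~\ref{thm:omega3}. Since $P$ is thick, $\Sigma_P$ is a nontrivial weakly mixing, hence transitive and perfect, subshift of $\Sigma_2^+$, so it is a Cantor set. By Lemma~\ref{lem:Geh} the subdendrite $D_{\Sigma_P}=\bigcup_{\xi\in\Sigma_P}[x_\xi,p]$ is homeomorphic to the Gehman dendrite; by Lemma~\ref{lem:subdendrite} it is $g$-invariant, and $f:=g|_{D_{\Sigma_P}}$ is a continuous self-map of the Gehman dendrite. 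This is the candidate map.

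For (\ref{cor:omega4:c2}), equip $G$ with the standard geodesic metric in which each arc $B_{i_1\dots i_n}$ has length $2^{-n}$. Then $\Phi\colon\Sigma_P\to D_{\Sigma_P}$ defined by $\Phi(\xi)=x_\xi$ is a bi-Lipschitz homeomorphism onto the set $E'$ of endpoints of $D_{\Sigma_P}$, because two endpoints $x_\xi,x_\eta$ whose symbolic codings share exactly a prefix of length $k-1$ sit at geodesic distance comparable to $2^{-k}$, matching the symbolic metric on $\Sigma_P$ up to a bounded constant. The map $\Phi$ conjugates $\sigma|_{\Sigma_P}$ with $f|_{E'}$; since $E'$ is closed in $D_{\Sigma_P}$ and forward-invariant under $f$, one obtains $\omega_f(x_\xi)=\Phi(\omega_\sigma(\xi))$ for every $\xi\in\Sigma_P$. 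Theorem~\ref{thm:omega3} then yields directly that $\Phi(\Gamma)$ is an uncountable $\omega$-scrambled set for $f$: the uncountable set-differences, the nonempty intersections, and the presence of non-minimal (hence non-periodic) points in every $\omega$-limit set all descend to $D_{\Sigma_P}$ through $\Phi$.

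For (\ref{cor:omega4:c1}), I would split the DC3 analysis of a pair $(x,y)\in D_{\Sigma_P}\times D_{\Sigma_P}$ into three cases. If $x,y\in E'$, bi-Lipschitz conjugacy preserves the non-existence of DC3 pairs, and Theorem~\ref{thm:omega3} guarantees no DC3 pairs in $\Sigma_P$, so the pair is not DC3. If neither $x$ nor $y$ belongs to $E'$, each lies in the interior of some arc $B_{i_1\dots i_m}$, and iterating $g$ collapses it to the fixed point $p$ in at most $m$ steps; hence $\rho(f^n(x),f^n(y))=0$ for all sufficiently large $n$, so $F^{*}_{xy}=F_{xy}\equiv 1$ on $(0,\infty)$ and the pair is not DC3. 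In the mixed case, for large $n$ we have $f^n(y)=p$ while $f^n(x)\in E'$, so $\rho(f^n(x),f^n(y))=\rho(f^n(x),p)=1$ in the uniform geodesic metric; both distribution functions are step functions at $t=1$, which again rules out DC3. The most delicate point---and the one I expect to carry the real content---is the mixed case: the argument hinges on choosing a metric under which every endpoint is equidistant from $p$, which the standard geodesic embedding provides for free; without such uniformity one would be forced to control the empirical distribution of $\rho(f^n(x),p)$ orbit by orbit along the (highly non-generic) orbits of $\Sigma_P$.
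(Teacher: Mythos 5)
Your construction and overall strategy coincide with the paper's: take $X=\Sigma_P$ from Theorem~\ref{thm:omega3}, pass to the dendrite $D_X$ via Lemmas~\ref{lem:subdendrite} and~\ref{lem:Geh}, read off the uncountable $\omega$-scrambled set from the subsystem on the endpoint set conjugate to $(\Sigma_P,\sigma)$, and rule out DC3 pairs by a case analysis in which every non-endpoint is eventually (hence asymptotically) the fixed point $p$, while every endpoint stays at constant distance from $p$. The paper packages your three-way split into a single appeal to the asymptotic-pair lemma of M\'alek and Oprocha, but the content is the same, and your treatment of the ``neither'' and ``mixed'' cases is fine.

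The one step that is not sound as written is the claim that bi-Lipschitz conjugacy preserves the non-existence of DC3 pairs. DC3 is precisely the variant of distributional chaos that fails to be a conjugacy invariant, and a two-sided bound $c\,\rho\le\rho'\le C\,\rho$ with $c<C$ only yields $F_{xy}(t/C)\le G_{xy}(t)\le G^{*}_{xy}(t)\le F^{*}_{xy}(t/c)$ for the distribution functions $G_{xy},G^{*}_{xy}$ computed in $\rho'$; since $t/c>t/C$, this does not force $G^{*}_{xy}(t)=G_{xy}(t)$ anywhere, even when $F^{*}_{xy}=F_{xy}$ identically. For instance, orbit distances constantly equal to $1$ in $\rho$ (no DC3) may oscillate between $c$ and $C$ along a set of times with no density in $\rho'$, producing a DC3 pair; so ``comparable up to a bounded constant'' is not enough. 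The gap is easily closed in your setting: with arc lengths $2^{-n}$, the geodesic distance between two endpoints whose codes first differ at position $m+1$ equals $2\sum_{n=m+1}^{\infty}2^{-n}=2^{-m+1}$, which is exactly $4$ times their distance in the shift metric. Hence the geodesic metric on $E'$ is a constant multiple of (more generally, a monotone function of the common-prefix length determining) the symbolic metric, so the distribution functions of corresponding pairs differ only by a monotone reparametrization of the variable $t$, and this does preserve the absence of DC3 pairs. With that observation inserted, your proof is complete.
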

\begin{proof}
Let $\Sigma_P$ be a spacing subshift provided by Theorem~\ref{thm:omega3}. Taking $X=\Sigma_P$ in  Lemma~\ref{lem:subdendrite} we obtain a map $f\colon D_X\to D_X$ on the Gehman dendrite $D_X$
such that we may view $\Sigma_P\subset D_X$ as a set of endpoints of $D_X$ invariant for $f.$ Furthermore, $f |_{\Sigma_P}$ is topologically conjugate to $\sigma$ on $\Sigma_P.$ 
Moreover, there is a fixed point $p\in D_X\setminus \Sigma_P$ and for every $y\in D_X\setminus \Sigma_P$ there is $n>0$ such that $f^n(y)=p$.

It is clear that \eqref{cor:omega4:c2} is satisfied, since $(\Sigma_P,\sigma)$ is a subsystem of $(D_X,f)$ containing an uncountable $\omega$-scrambled set.
Furthermore, if pairs $(z_1,y_1)$, $(z_2,y_2)$ are asymptotic then $F_{z_1z_2}=F_{y_1y_2}$ and $F^*_{z_1z_2}=F^*_{y_1y_2}$ (e.g. see \cite[Lemma~5]{malopr11}). This shows that there is no DC3 pair in $D_X\setminus \set{p}$. But distance in $D_X$ is given by arclength, hence we may assume that $\rho(p,x)=1$ for every $x\in \Sigma_P$. This implies that
$(p,x)$ is not DC3 for any $x\in \Sigma_P$ and automatically $(p,y)$ is not DC3 for any $y\in D_X\setminus \Sigma_P$. The proof is finished.
\end{proof}

\section{Shift without DC3 pairs extended to mixing} \label{sec:mix}

Let $X$ be a shift. Note that $\rho(x,y)<2^{-k}$ for some $k \geq 0$ and $x,y \in X$ implies that $\rho(x,y)\leq 2^{-k-1},$ hence $\rho(x,y)<t$ for every $t \in (2^{-k-1},2^{-k}).$  
It follows that $F_{xy}^{(n)}(t)=F_{xy}^{(n)}(2^{-k})$ for each $n \geq 1$, $k \geq 0$, $x,y \in X$ and $t \in (2^{-k-1},2^{-k}).$ Thus $F_{xy}(t)=F_{xy}(2^{-k})$ and $F_{xy}^*(t)=F_{xy}^*(2^{-k})$ for  $t \in (2^{-k-1},2^{-k}).$ In other words, $F_{xy}$ and $F_{xy}^*$ are piecewise constant functions.

\begin{lem}\label{lem:shift}
Let $X$ be a shift. Then  $(x,y) \in X \times X$ is a  DC3 pair if and only if there exists an integer $k\geq 0$ such that $F_{xy}(2^{-k})<F_{xy}^*(2^{-k}).$ 
 \end{lem}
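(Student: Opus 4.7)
The plan is to ride entirely on the piecewise–constant description of $F_{xy}$ and $F_{xy}^*$ that the paragraph just before the lemma has already established: both functions are constant on each dyadic interval $(2^{-k-1},2^{-k})$, taking there the value at the right endpoint $2^{-k}$. Once this is in hand the lemma is essentially a tautology saying that the only places one ever needs to test for a DC3 gap are the countably many dyadic anchors $2^{-k}$, $k\geq 0$.

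For the easy direction $(\Leftarrow)$, I would simply fix $k\geq 0$ with $F_{xy}(2^{-k})<F_{xy}^*(2^{-k})$ and set $a=2^{-k-1}$, $b=2^{-k}$. For every $t\in(a,b)$ the piecewise constancy yields
\[
F_{xy}(t)=F_{xy}(2^{-k})<F_{xi}\,\,\text{—}\,\,F_{xy}^*(2^{-k})=F_{xy}^*(t),
\]
so $(x,y)$ is DC3 by definition. (I will of course drop the typo in the display; it is just $F_{xy}(t)<F_{xy}^*(t)$ on the whole interval $(a,b)$.)

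For $(\Rightarrow)$, assume $(x,y)$ is DC3 with witness interval $(a,b)$. The first step is to observe that $(a,b)$ must meet $(0,\mathrm{diam}\,X]$: outside this range the paper has forced $F_{xy}=F_{xy}^*$ (equal to $0$ on $(-\infty,0]$ and to $1$ on $(\mathrm{diam}\,X,\infty)$), so a strict inequality throughout $(a,b)$ would be impossible otherwise. Pick any $t_0\in(a,b)\cap(0,\mathrm{diam}\,X]$. If $t_0=2^{-k}$ for some $k\geq 0$ we are already done; otherwise $t_0$ lies in some open dyadic interval $(2^{-k-1},2^{-k})$ and the piecewise constancy promotes the strict inequality at $t_0$ to the anchor:
\[
F_{xy}(2^{-k})=F_{xy}(t_0)<F_{xy}^*(t_0)=F_{xy}^*(2^{-k}).
\]

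I do not expect any genuine obstacle in this argument. The only mild care points are bookkeeping: ensuring the DC3 witness interval is not hiding entirely in the region where both functions have been forced to coincide, and translating a strict inequality at a generic $t\in(a,b)$ into one at the dyadic anchor of the interval that contains $t$. Both issues dissolve immediately from the piecewise–constant structure already proved above.
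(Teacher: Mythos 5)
Your proof is correct and takes essentially the same route as the paper's: both directions rest on the piecewise constancy of $F_{xy}$ and $F_{xy}^*$ on the dyadic intervals $(2^{-k-1},2^{-k})$, and the converse is obtained by transporting the strict inequality from a point of the witness interval to its dyadic anchor (the paper anchors at the right endpoint $b$ with $2^{-k-1}<b\leq 2^{-k}$, you at an arbitrary interior point $t_0$ --- an immaterial difference). Your extra remark that the witness interval must lie in $(0,\operatorname{diam}X]$ is a small bookkeeping point the paper passes over silently, not a change of method.
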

 \begin{proof}
Assume first that $F_{xy}(2^{-k})<F_{xy}^*(2^{-k})$ for some integer $k \geq 0.$  
It follows that $ F_{xy}(t) < F^*_{xy}(t)$ for $t \in (2^{-k-1},2^{-k})$ so $(x,y)$ is a  DC3 pair.

On the other hand, if $(x,y)$ is a  DC3 pair then there exists an interval $(a,b)\subset [0,1]$ such that  $F_{xy}(t)<F_{xy}^*(t)$ for all $t \in (a,b).$ Let $k \geq 0$  be such that  $2^{-k-1} <b \leq2^{-k}.$ But $F_{xy}$ and $F_{xy}^*$ are constant over each interval $(2^{-k},2^{-k+1})$ hence $F_{xy}(2^{-k}) <F_{xy}^*(2^{-k})$ for some $k$.
\end{proof}

\begin{lem} \label{lem:dc3}
If $X \subset \Sigma_2^+$ is a shift space such that for every $x \in X$ we have $d\left(\{i:x_i=0\}\right)=1$ then there is no DC3 pair in $X.$
\end{lem}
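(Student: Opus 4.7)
The plan is to apply Lemma~\ref{lem:shift}: a pair $(x,y) \in X \times X$ is DC3 iff there exists $k \geq 0$ with $F_{xy}(2^{-k}) < F^*_{xy}(2^{-k})$. So it suffices to show that for every $x, y \in X$ and every integer $k \geq 0$ we have $F_{xy}(2^{-k}) = F^*_{xy}(2^{-k}) = 1$, so that no such $k$ can exist. The case $k = 0$ is trivial because $\rho(x,y) \leq 1/2 < 1$ for all $x, y \in \Sigma_2^+$, hence $F_{xy}^{(n)}(1) = 1$ for every $n$. I therefore focus on $k \geq 1$.

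Fix $k \geq 1$ and $x, y \in X$. Unpacking the metric, $\rho(\sigma^i(x), \sigma^i(y)) < 2^{-k}$ is equivalent to $x_{i+j} = y_{i+j}$ for every $j = 1, \ldots, k$, and a convenient sufficient condition is that $x_{i+j} = y_{i+j} = 0$ for all such $j$. Put
$$B_x = \{i \geq 0 : x_{i+j} = 0 \text{ for every } 1 \leq j \leq k\}$$
and define $B_y$ analogously.

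The main step is the density estimate $\underline{d}(B_x \cap B_y) = 1$. By hypothesis $\{i : x_i = 1\}$ has density $0$, so each shift $\{i : x_{i+j} = 1\}$ does too; since upper density is finitely subadditive, $B_x^c = \bigcup_{j=1}^{k}\{i : x_{i+j} = 1\}$ has upper density $0$, giving $\underline{d}(B_x) = 1$. The same holds for $B_y$, and one more application of subadditivity to $(B_x \cap B_y)^c = B_x^c \cup B_y^c$ yields $\underline{d}(B_x \cap B_y) = 1$. On $B_x \cap B_y$ both $\sigma^i(x)$ and $\sigma^i(y)$ begin with $k$ zeros and hence agree on their first $k$ coordinates, so $\rho(\sigma^i(x), \sigma^i(y)) < 2^{-k}$. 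Therefore
$$F_{xy}^{(n)}(2^{-k}) \geq \frac{1}{n}\,\bigl|(B_x \cap B_y) \cap \{0, \ldots, n-1\}\bigr|,$$
and passing to $\liminf$ gives $F_{xy}(2^{-k}) \geq 1$, so $F_{xy}(2^{-k}) = F^*_{xy}(2^{-k}) = 1$.

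I do not anticipate any real obstacle here: the only care required is with the indexing between $\sigma$ and $\rho$, and with the observation that finite subadditivity of upper density is what keeps $B_x \cap B_y$ at lower density $1$. Note also that the hypothesis $d(\{i : x_i = 0\}) = 1$ is used only via the weaker fact that $\bar d(\{i : x_i = 1\}) = 0$, so essentially the same argument yields a slightly stronger statement under this weaker assumption.
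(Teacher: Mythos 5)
Your proof is correct and follows essentially the same route as the paper: both arguments rest on the fact that the $1$'s in each of $x$ and $y$ form a set of density zero, combined with finite subadditivity of upper density, to conclude that $F_{xy}(t)=F^*_{xy}(t)=1$ for all $t>0$. The only difference is cosmetic --- the paper shows $\bar d(\{i: x_i\neq y_i\})=0$ and declares the conclusion ``clear,'' whereas you spell out that last step via the sets $B_x\cap B_y$ of common zero-blocks of length $k$ and the reduction through Lemma~\ref{lem:shift}.
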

\begin{proof}
Let $x,y \in X$. Then 
\begin{align*}
\bar{d}\left(\{i:x_i\neq y_i\}\right)& = \bar{d}\left(\{i:x_i\neq 0, y_i=0\} \vee \{i:x_i =0, y_i \neq 0\}\right)  \\
& \leq \bar{d}\left(\{i:x_i\neq 0, y_i=0\}\right) + \bar{d}\left(\{i:x_i= 0, y_i \neq 0\}\right)\\
&\leq d\left(\{i:x_i\neq0\}\right)+d\left(\{i:y_i\neq0\}\right)=0.
\end{align*}
Thus $d(\{i:x_i=y_i\})=1$ which clearly implies that $(x,y)$ is not a DC3 pair.
\end{proof}
\begin{rem}
Let $X$ be a shift space. Then $d\left(\{i:x_i=0\}\right)=1$ for every $x \in X$ if and only if the measure concentrated on $0^{\infty}$ is the only invariant measure for $X$  (see \cite{kwie13}).
\end{rem}

\begin{lem}\label{lem:infty}
Let $X$ be a shift and $x \in X$ such that $d(\{i: x_i=0\})=1$. If $(x,y)$  is a DC3 pair then $(0^\infty, y)$ is a DC3 pair.
\end{lem}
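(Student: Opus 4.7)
The plan is to apply Lemma \ref{lem:shift} in both directions: show that the distribution functions $F_{xy}$, $F_{xy}^*$, $F_{0^\infty y}$, $F_{0^\infty y}^*$ agree at every dyadic threshold $2^{-k}$, so that the strict inequality $F_{xy}(2^{-k})<F_{xy}^*(2^{-k})$ witnessing DC3 for $(x,y)$ transfers verbatim to $(0^\infty,y)$. The reason this should work is that $x$ agrees with $0^\infty$ on a set of indices of density one, and so for each fixed $k$ the first $k$ coordinates of $\sigma^i(x)$ coincide with $0^k$ for a density-one set of $i$'s; on that set the comparisons $\rho(\sigma^i(x),\sigma^i(y))<2^{-k}$ and $\rho(0^\infty,\sigma^i(y))<2^{-k}$ express the same condition on $y$.

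Concretely, for each $k\geq 1$ I would introduce
$$A_k=\{i\geq 0 : x_{i+1}=x_{i+2}=\cdots=x_{i+k}=0\}.$$
Setting $B=\{j\geq 1 : x_j\neq 0\}$, the containment $\N_0\setminus A_k\subseteq \bigcup_{j\in B}\{j-k,\ldots,j-1\}$ gives $|[0,n)\setminus A_k|\leq k\cdot |\{1\leq j\leq n+k : x_j\neq 0\}|$; since the hypothesis $d(\{i:x_i=0\})=1$ forces $d(B)=0$, dividing by $n$ and letting $n\to\infty$ yields $d(A_k)=1$. The formula for $\rho$ on $\A^{\N}$ shows that $\rho(\sigma^i(x),\sigma^i(y))<2^{-k}$ iff $x_{i+j}=y_{i+j}$ for $j=1,\ldots,k$, while $\rho(0^\infty,\sigma^i(y))<2^{-k}$ iff $y_{i+1}=\cdots=y_{i+k}=0$; for $i\in A_k$ these two conditions coincide. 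Hence the symmetric difference of
$$\{0\leq i<n : \rho(\sigma^i(x),\sigma^i(y))<2^{-k}\}\ \text{and}\ \{0\leq i<n : \rho(0^\infty,\sigma^i(y))<2^{-k}\}$$
is contained in $[0,n)\setminus A_k$, which forces $|F_{xy}^{(n)}(2^{-k})-F_{0^\infty y}^{(n)}(2^{-k})|\to 0$, and passing to $\liminf$ and $\limsup$ gives the desired identities $F_{xy}(2^{-k})=F_{0^\infty y}(2^{-k})$ and $F_{xy}^*(2^{-k})=F_{0^\infty y}^*(2^{-k})$ for every $k\geq 0$.

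To finish, since $(x,y)$ is a DC3 pair, Lemma \ref{lem:shift} yields some $k\geq 0$ with $F_{xy}(2^{-k})<F_{xy}^*(2^{-k})$; the identities above furnish the same strict inequality for $(0^\infty,y)$, and a second appeal to Lemma \ref{lem:shift} shows that $(0^\infty,y)$ is a DC3 pair. The one technical step is the density estimate $d(A_k)=1$, which is the main (though quite mild) obstacle; once it is in hand, the rest is a direct translation through the metric formula and the piecewise-constant structure of $F_{xy}$ and $F_{xy}^*$ recalled just before Lemma \ref{lem:shift}.
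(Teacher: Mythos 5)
Your proof is correct and follows essentially the same route as the paper: both reduce to the dyadic thresholds $2^{-k}$ via Lemma \ref{lem:shift} and exploit that $d(\{i : x_{[i,i+k)}=0^k\})=1$ to identify the block comparisons for $(x,y)$ and $(0^\infty,y)$ outside a density-zero set of shifts. Your packaging --- showing $F^{(n)}_{xy}(2^{-k})-F^{(n)}_{0^\infty y}(2^{-k})\to 0$ and hence the identities $F_{xy}(2^{-k})=F_{0^\infty y}(2^{-k})$, $F^{*}_{xy}(2^{-k})=F^{*}_{0^\infty y}(2^{-k})$ --- is a cleaner and slightly stronger way of organizing the paper's two-sided $\gamma$--$\delta$ estimates along the subsequences $(s_i)$ and $(t_i)$.
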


\begin{proof}
By Lemma~\ref{lem:shift} it is enough to check that $F_{xy}(2^{-k})<F_{xy}^*(2^{-k})$ for some $k\geq 0.$

If $(x,y)$ is a DC3 pair then there exist an integer $l>0,$ an increasing sequence $(s_i)_{i=1}^{\infty}$ of positive integers  and  $\gamma>0$ such that
$$\frac{1}{s_i}\left| \left\{ 0< k <s_i: x_{[k,k+l)} \neq y_{[k,k+l)}\right\}\right| \geq \gamma.
$$
Observe that we can write the set
$ \left\{ 0< k <s_i: x_{[k,k+l)} \neq y_{[k,k+l)}\right\}$  as the disjoint union of 
$$\left\{ 0< k <s_i: x_{[k,k+l)} \neq y_{[k,k+l)} \text{ and } x_{[k,k+l)}\neq 0^l \right\}$$ and 
$$\left\{ 0< k <s_i: y_{[k,k+l)} \neq x_{[k,k+l)}=0^l \right\}.$$
Now, $d\left(\{i: x_i=0\}\right)=1$ implies that for every $k>0$ we have  $$d\left(\{i: x_{[i,i+k)}=0^k\}\right)=1.$$
We get that for every $\delta>0$ there exists $N$ such that for all $i>N$ we have
$$ \frac{1}{s_i}\left|\left\{ 0< k <s_i: x_{[k,k+l)} \neq y_{[k,k+l)} \text{ and } x_{[k,k+l)}\neq 0^l \right\} \right| <\delta.$$
From that we obtain
\begin{align*}
\gamma \leq \frac{1}{s_i}\left| \left\{ 0< k <s_i: x_{[k,k+l)} \neq y_{[k,k+l)}\right\}\right| \leq \frac{1}{s_i}\left|\left\{ 0< k <s_i: y_{[k,k+l)} \neq 0^l \right\} \right|+ \delta
\end{align*}
and finally
$$ \frac{1}{s_i} \left|\left\{ 0< k <s_i: y_{[k,k+l)} \neq 0^l \right\} \right| \geq \gamma - \delta.$$
On the other hand there exist a decreasing sequence $(t_i)_{i=1}^{\infty}$ and $0<\alpha<\gamma$ 
$$\frac{1}{t_i}\left| \left\{ 0< k <t_i: x_{[k,k+l)} \neq y_{[k,k+l)}\right\}\right| \leq \alpha.$$
A similar calculations as above gives that
$$ \frac{1}{t_i}\left| \left\{ 0< k <t_i: y_{[k,k+l)}\neq 0^l\right\}\right| \leq \alpha - \eps
$$
where $\eps>0$ is such that 
$$ \frac{1}{t_i}\left|\left\{ 0\leq k <t_i: x_{[k,k+l)} \neq y_{[k,k+l)} \text{ and } x_{[k,k+l)}\neq 0^l \right\} \right| <\eps.$$
Since $\eps$ and $\delta$ can be arbitrarily small this completes the proof that $(0^\infty, y)$ is a DC3 pair.
\end{proof}

\begin{thm}\label{thm:mix}
Let $X$ be a shift such that for every $x \in X$ we have $d(\{i: x_i=0\})=1.$ Then there exists a mixing shift $Y$ containing $X$  
such that there is no  DC3 pair in $Y$.
\end{thm}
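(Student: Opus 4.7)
The strategy is to realize $Y$ as the orbit closure of a single sequence $z \in \Sigma_2^+$ assembled in stages. The construction must simultaneously create enough ``gluing'' of patterns to make $Y$ topologically mixing, and spread out the $1$'s so sparsely that every $y \in Y$ inherits $d(\{i : y_i = 0\}) = 1$. Once this happens, Lemma~\ref{lem:dc3}, applied directly to the shift $Y$, yields that $Y$ has no DC3 pair.

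Fix an enumeration $\mathcal{L}(X) = \{w_1, w_2, \ldots\}$ and build a nested sequence of finite prefixes $z^{(1)}, z^{(2)}, \ldots$. At stage $n$ write $\mathcal{L}_{n-1}$ for the finite set of subwords of $z^{(n-1)}$, and choose integers $L_n \leq L_n + K_n$ so that the intervals $[L_n, L_n + K_n]$, $n \geq 1$, together cover every integer $\geq L_1$. Extend $z^{(n-1)}$ to $z^{(n)}$ by appending a long zero block $0^{N_n}$ followed (with further zero padding between them) by all the words $u\,0^m\,v$ with $(u, v, m) \in (\mathcal{L}_{n-1} \cup \{w_n\})^2 \times [L_n, L_n + K_n]$. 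Impose $L_n \geq n(|u|+|v|)$ for every $u, v$ used at stage $n$ (so that each concatenation block $u\,0^m\,v$ has internal density of $1$'s at most $1/n$) and choose $N_n$ so large that the total number $T_{n-1}$ of $1$'s already present in $z^{(n-1)}$ is at most $N_n/n^2$. Let $z$ be the infinite limit and set $Y := \overline{O(z)}$.

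Verification proceeds in four steps. (i) $\mathcal{L}(X) \subseteq \mathcal{L}(Y)$ since each $w_n$ is a subword of $z$, and a subshift is determined by its language, so $X \subseteq Y$. (ii) Given $u, v \in \mathcal{L}(Y) = \bigcup_n \mathcal{L}_n$, fix $n_0$ with $u, v \in \mathcal{L}_{n_0}$; then for every $m \geq L_{n_0+1}$ the triple $(u, v, m)$ is enumerated at some stage $n > n_0$, so $u\,0^m\,v \in \mathcal{L}(Y)$, proving mixing. (iii) Fix $\varepsilon > 0$, pick $n_0$ with $1/n_0 < \varepsilon$, and set $M = T_{n_0}/\varepsilon$. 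A window $z_{[k+1,\,k+\ell+1)}$ of length $\ell \geq M$ decomposes into parts that lie in a padding $0^{N_n}$ (no $1$'s), in a single stage-$n$ block with $n \geq n_0$ (density of $1$'s at most $1/n \leq \varepsilon$ by the choice of $L_n$), or inside the bounded prefix $z^{(n_0)}$ (contributing at most $T_{n_0} \leq \varepsilon\ell$). Hence $\|z_{[k+1,\,k+\ell+1)}\|_1 \leq 2\varepsilon\ell$ uniformly in $k$. Since every $y \in Y$ is a coordinatewise limit $\sigma^{k_l}(z) \to y$, this bound transfers to initial segments of $y$, so $d(\{i : y_i = 0\}) = 1$ for every $y \in Y$. (iv) Apply Lemma~\ref{lem:dc3} to $Y$.

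The main obstacle is step (iii): driving the \emph{average} density of $1$'s in $z$ to zero is routine via the paddings $N_n$, but driving it to zero \emph{uniformly} in the window position is what forces the extra internal constraint $L_n \geq n(|u|+|v|)$ on each individual block. A secondary subtlety is that mixing must be verified between arbitrary elements of $\mathcal{L}(Y)$, not merely of $\mathcal{L}(X)$; this is why the stage-$n$ enumeration ranges over $(\mathcal{L}_{n-1} \cup \{w_n\})^2$ rather than once-and-for-all over $\mathcal{L}(X)^2$.
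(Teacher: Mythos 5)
Your overall architecture is sound and genuinely different from the paper's: you realize $Y$ as the orbit closure of a single carefully assembled point $z$, whereas the paper builds $Y$ as the closure of an increasing union of subshifts $X_0\subset X_1\subset\cdots$ obtained by adjoining the orbits of all points $0^\alpha u\,0^\beta v\,0^\infty$ with $u,v$ taken from the language of the previous stage. Steps (i), (ii) and (iv) are essentially fine (in (ii) the threshold should be $\max_{n\le n_0}(L_n+K_n)+1$ rather than $L_{n_0+1}$, since an early interval $[L_n,L_n+K_n]$ could stretch past $L_{n_0+1}$; this is cosmetic).

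The genuine gap is in step (iii). Your decomposition asserts that the part of a window lying ``in a single stage-$n$ block'' has $1$-density at most $1/n$, but the constraint $L_n\ge n(|u|+|v|)$ only controls the density of the \emph{entire} block $u\,0^m\,v$. A window of length $\ell\ge M$ can sit entirely inside the constituent word $u$ (or $v$), or cover exactly $u$ plus a short piece of $0^m$, or straddle the tail of one block and the head of the next; in all of these cases the relevant quantity is the maximal number of $1$'s in a length-$\ell$ window of $z^{(n-1)}$ (or of a word of $X$), not $\ell/n$. Since $u$ ranges over all subwords of $z^{(n-1)}$, which become arbitrarily long, this case cannot be dismissed, and the bound you need is circular unless you prove, by induction on the stage, a uniform statement of the form ``every length-$\ell$ window of $z^{(n)}$ contains at most $\varepsilon\ell+C_\varepsilon$ ones, with $C_\varepsilon$ independent of $n$.'' This is exactly the bookkeeping the paper performs: it tracks $\varphi^l_n=\max\{\|w\|_1\colon w\in\mathcal{L}_n(X_l)\}$ and proves the exact identity $\varphi^l_n=\varphi_n$ by induction on $l$, using the requirement that the gap $\beta$ between $u$ and $v$ satisfy $\varphi_\beta>2\varphi^n_n$. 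Your construction can be repaired along the same lines (and the unspecified ``further zero padding'' between consecutive blocks must then also be quantified, since a window meeting two adjacent blocks picks up two edge contributions), but as written the uniform window bound, and hence $d(\{i\colon y_i=0\})=1$ for every $y\in Y$ and the applicability of Lemma~\ref{lem:dc3}, is not established.
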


\begin{proof}
First, note that from Lemma~\ref{lem:dc3} we have that there in no DC3 pair in X.
Now we inductively construct an increasing sequence of shift spaces $X_0 \subset X_1 \subset \dots$ and then define space $Y$ as the closure of the union of all $X_n$'s i.e.
$$Y= \overline{\bigcup_{n=0}^{\infty} X_n}.$$
Let $X_0=X \cup W,$ where  $W$ is any shift such that $\# W >0$ and $d(\{i \colon x_i=0\})=1$ for any $x \in W$.
For example  see  \cite[Example~8.5]{kul14}. We define the set $X_1$ adding to $X_0$ orbits of points of the form 
$$0^\alpha 1 0^\beta 1 0^\infty, \text{ where } \alpha \geq 0 \text{, } \beta \geq 2.$$
Note that every block added at first step has at most two occurrences of the symbol 1.

By  $\varphi_n,$ $\varphi_n^l$ we denote the maximum number of occurrences of the symbol 1 among all blocks of length $n$ in $X$ and $X_l$, respectively, that is, 
$$\varphi_n = \max \{\|w\|_1: w\in \mathcal{L}_n (X_0)\} $$ and $$\varphi_n^l =  \max \{\|w\|_1: w\in \mathcal{L}_n (X_l)\},$$ provided that $\varphi^l_n$ was already defined.

Now, inductively, for given $X_n$ and $n \geq 0$ we construct a shift space $J_{n+1}$ and set $X_{n+1}=\overline{X_n \cup J_{n+1}}$, where
$$J_{n+1}=\bigcup_{m=0}^{\infty} \sigma^m \left(\{ 0^\alpha u 0^\beta v 0^\infty: \alpha \geq 0\text{, } u,v \in \mathcal{L}_n(X_n)\text{, } \beta \text{ such that } \varphi_{\beta} >2 \varphi^n_{n}\}\right) $$
Note that each $X_n$ is a subshift and let us first notice that the sequence $(\varphi_n)_{n=1}^{\infty}$ is non-negative and subadditive (i.e. $0\leq \varphi_{m+n}\leq \varphi_m + \varphi_n$).

Observe that $$J_{n+1}=\overline{J_{n+1}}$$ so $$X_{n+1}=X_n \cup J_{n+1}.$$ We claim that for all $l \in \N$ and $n \in \N$ $$\varphi_n=\varphi_n^l.$$
Let us notice first that from property that $X \subset X_l$ we have that $\varphi_n \leq \varphi^l_n$ for all $l \in \N.$ The inequality $\varphi_n^l \leq \varphi_n$ we prove by induction on $l\in \N$. It is clear when $l=0$. Now, assume that $\varphi_n^{l-1} \leq \varphi_n$ for some $l > 0.$ Let $u\in \mathcal{L}_n(X_l).$ We get two possibilities, $u \in \mathcal{L}_n(X_{l-1})$ or $u \in \mathcal{L}_n(J_l).$ 
If $u \in L_n(X_{l-1})$ we have
$$\|u\|_1 \leq \varphi^{l-1}_n \leq \varphi_n.$$
On the other hand if $u \in \mathcal{L}_n(J_l)$ and  $\beta$ from the definition of $J_l$ is fixed for some $x \in J_l$ containing $u$ as a subword, we get two cases.

{\it Case 1.} If $|u| \leq \beta$ we have that $u=0^kw0^s$ for some $k, s \geq 0$ and $w \in \mathcal{L}_r(X_{l-1})$ for $r<n.$ Then
$$\|u\|_1 \leq \varphi_r^{l-1}\leq \varphi_n^{l-1}\leq\varphi_n.$$

{\it Case 2.} Suppose that $|u| > \beta$. Now we get that $u \sqsubset 0^sw0^{\beta}v0^t$ for $s, t \geq 0$ and $w,v \in \mathcal{L}_{l-1}(X_{l-1})$. Therefore by definition of $\beta$ we obtain
$$\|u\|_1 \leq 2\max\left\{\|v\|_1 \colon v \in \mathcal{L}_{l-1}(X_{l-1})\right\} \leq 2 \varphi^{l-1}_{l-1}\leq\varphi_{\beta}<\varphi_n.$$ The claim is proved.

Let us notice that the condition $\lim_{n \to \infty}\frac{\varphi_n}{n}=0$ is equivalent to $d(\{i: x_i = 0\})=1$ (see \cite[Theorem 3]{kwifal15}).
 
Now we will prove that $d(\{i: y_i = 1\})=0$ for every $y \in Y.$ If $y \in \bigcup^{\infty}_{n=0} X_n$ then $y \in X_k$ for some $k$ and then $$\frac{1}{n} \left|\{0\leq i <n: y_i=1\} \right| \leq \frac{1}{n}\cdot \varphi_n^k \leq \frac{\varphi_n}{n} \to 0$$
Now fix $y \in Y \setminus \bigcup^{\infty}_{n=0} X_n.$ Then there exists a sequence $x_k \in X_{n_k}$ where $(n_k)_{k=0}^{\infty} \subset \N$ is an increasing sequence and 
$$\lim_{k \to \infty}x_k=y \in Y \setminus \bigcup^{\infty}_{n=0} X_n.$$ 
For each $n$ there exist $N \geq 0$ such that for all $k \geq N$
$$y_{[0,n)}=(x_k)_{[0,n)}.$$ 
In particular, for some sufficiently large $l$ we get
that
$$\left| 0\leq i \leq n \colon \varphi_i=1 \right| = \left|\{0\leq i <n: (x_l)_i=1\} \right|  \leq \varphi_n^l \leq \varphi_n.$$

The shift space $Y$ is topologically mixing.
Indeed, fix any words $u, v \in \mathcal{L}(Y).$ We get that there exists $k\geq 0$ such that $u, v \in \mathcal{L}(X_k)$. 
Extending $u$ and $v$ to possibly larger words we may assume that $u,v \in \mathcal{L}_m(X_k)$ for some $m \geq k.$ But $X_k \subset X_{k+1} \subset \dots \subset X_m$ so $u,v \in \mathcal{L}_m(X_m).$

Therefore $u0^\alpha v 0^\infty \in J_{k+1} \subset Y$ for all $\alpha \geq 0$ sufficiently large. It ends the proof that $Y$ is topologically mixing.
\end{proof}

\section{Distributional chaos without an infinite LY-scrambled set} \label{sec:inf}
Denote $I=[0,1].$ We perform an inductive construction. For the initial step set $m_0=1$ and $z_1^{(0)}=\frac{1}{2}.$ Define $Z^{(0)}=\left\{z_1^{(0)}\right\},$  $x_0^{(0)}=0,$ $x_1^{(0)}=z_1^{(0)},$ $x_2^{(0)}=1,$ and $l_0=1, l_{-1}=0.$  Note that the sequence $\left(x_i^{(0)}\right)_{i=0}^{2}$ contains $Z^{(0)},$ constructed so far, and the endpoints of $[0,1].$ Furthermore $x_0^{(0)}<x_1^{(0)}<x_2^{(0)}.$ 
For $i \in \N$ denote $$L_i=(l_{i-1}+1)m_i$$ with $$m_{i+1} \geq 2^i l_i.$$
Now for the inductive step assume that we have just constructed sets $$Z^{(0)}, Z^{(1)}, \dots, Z^{(n)} \subset I$$ where $$\left|Z^{(i)}\right| = 
L_{i}$$ and 
  $$l_i=\sum_{j=0}^i \left|Z^{(j)}\right|$$ for $i\leq n.$
In particular $Z^{(i)} \cap Z^{(j)} = \emptyset$ for $i \neq j.$ Note that $l_i$ and $L_i$ are constructed in such a way that $l_i+1\geq 2^i$ and $L_i \geq 2^i$ for all $i \in \N_0.$\\
We also assume that all elements of set $$\bigcup_{p=0}^n Z^{(p)}=\left(x_j^{(n)}\right)_{j=1}^{l_n} 
$$  were enumerated in such a way that 
$$0=x_0^{(n)}<x_1^{(n)}<\dots<x_{l_{n}+1}^{(n)}=1.$$
Additionally we assume that sets $Z^{(i)}=\left\{z_1^{(i)}, \dots, z_{L_i}^{(i)}\right\}$ are such that if we put $z_0^{(i)}=0$ and $z_{L_{i}+1}^{(i)} = 1$ then $\left| Z^{(i+1)}\cap \left(x_j^{(i)}, x_{j+1}^{(i)}\right)\right|=m_{i+1}$ for all $i \in \N_0$ and $j \in \{0,1, \dots, L_{i}\}.$ 
\\
Now fix any $$m_{n+1} \geq 2^n l_n $$ and define 
\begin{equation}\label{eq:defz}
z^{(n+1)}_{i \cdot m_{n+1}+k}= x_i^{(n)}+\frac{k}{m_{n+1}+1} \left( x_{i+1}^{(n)}-x_i^{(n)}\right) 
\end{equation}
for $i=0,1, \dots, l_n,$ $k=1,2,\dots, m_{n+1}.$ Then put 
$$Z^{(n+1)}=\left\{z_j^{(n+1)}: j=1,2, \dots, L_{n+1}\right\}.$$ Finally enumerate elements of set
$$\bigcup_{p=0}^{n+1} Z^{(p)}=\left(x_j^{(n+1)}\right)_{j=1}^{l_{n+1}}$$
in such a way that
$$0=x_0^{(n+1)}<x_1^{(n+1)}<\dots<x_{l_{n+1}+1}^{(n+1)}=1.$$
Now we define the sequence of points $$A_{n,k} = \left(z_k^{(n)}, \frac{1}{2^n}\right) \in I \times I$$ for $n \in \N_0 $ and $k \in \{1,2, \dots, L_{n}\}.$ 

Let us notice that by our construction  
\begin{equation}\label{eq:diamz}
\diam \left(z_j^{(k+1)}, z_{j+1}^{(k+1)}\right) \leq \frac{1}{m_{k+1}+1} \diam\left(z_j^{(k)}, z_{j+1}^{(k)}\right) \end{equation}
 for any $k$ and $j$ (see (\ref{eq:defz})) . 

If $\pi$ denotes projection on the first coordinate, i.e. $\pi(x,y)=(x,0),$ we get that $\pi(A_{n,i}) \neq \pi(A_{m,j})$ for every $n \neq m$ and every $i \in \{1,2,\dots, L_n\},$  $j\in \{1,2,\dots, L_m\}.$

\begin{figure}[htp]
	\centering
		\includegraphics[width=0.5\linewidth]{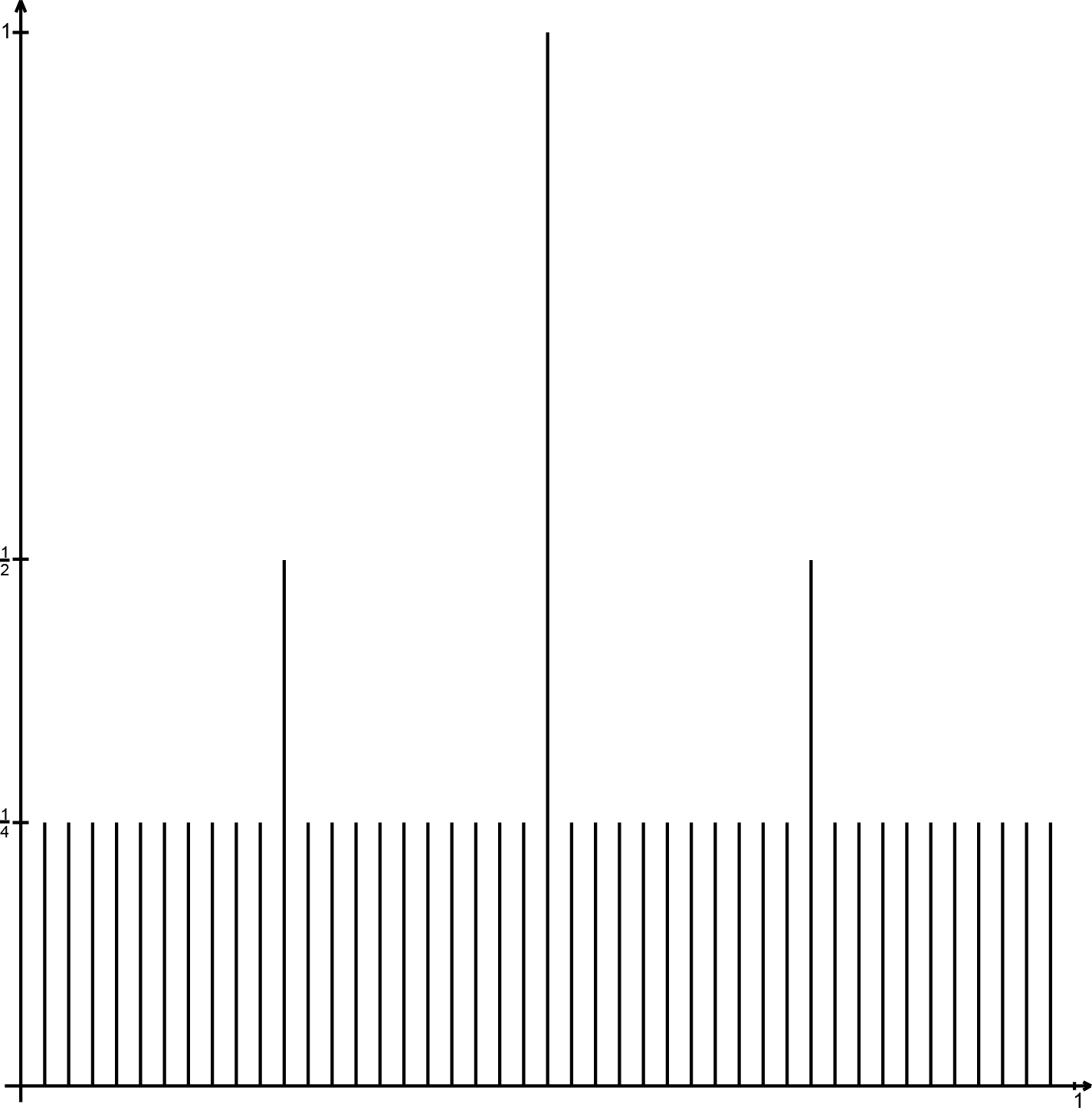}
	\caption{The result of the first three steps of the construction of the dendrite $\mathcal{D}$.}
	\label{fig:D}
\end{figure}

\begin{lem}
The set
$$\mathcal{D}=\left(I \times \{0\} \right)\cup \bigcup_{n=0}^{\infty} \bigcup_{k=1}^{ L_n} \left\{ \left(z_k^{(n)},y\right): y \in \left[0, \frac{1}{2^n} \right] \right\}$$
is a dendrite.
\end{lem}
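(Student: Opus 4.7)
The plan is to verify in turn that $\mathcal{D}$ is a continuum, that it is locally connected, and that it contains no simple closed curve.

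\textbf{Continuum.} $\mathcal{D}$ is bounded inside $[0,1]^2$, so I would only need to check closedness. Take a convergent sequence $(u_k,v_k)\to(u,v)$ with $(u_k,v_k)\in \mathcal{D}$. If $v=0$ the limit lies on the horizontal arc. If $v>0$, then eventually $v_k>v/2$, which forces $(u_k,v_k)$ to lie on a spine of height at least $v/2$; the collection of such spines is finite (there are finitely many levels $n$ with $1/2^n\geq v/2$, and each level contributes only $L_n$ spines), so the $u_k$ take values in a finite discrete subset of $[0,1]$ and must eventually equal $u=z_j^{(n)}$ for some admissible $(n,j)$; the limit then lies on that spine. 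Connectedness is immediate: every spine $\{z_j^{(n)}\}\times[0,1/2^n]$ contains its base point $(z_j^{(n)},0)\in I\times\{0\}$, so $\mathcal{D}$ is a union of arcs sharing the common subarc $I\times\{0\}$.

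\textbf{Local connectedness.} I would split into cases depending on the location of $p\in \mathcal{D}$. If $p=(x,0)$, the sets $U_\eps = \bigl((x-\eps,x+\eps)\times[0,\eps)\bigr)\cap \mathcal{D}$ form a basis of open neighbourhoods of $p$; each consists of an open horizontal arc together with the lower portions of the spines rooted in it, all of which are joined to the horizontal arc through their bases, so $U_\eps$ is path-connected. If $p=(z_j^{(n)},y)$ with $y>0$, the same finiteness principle shows that only finitely many other spines reach height above $y/2$; choosing $\eps<y$ smaller than the horizontal distance from $z_j^{(n)}$ to each of those spines, the $\eps$-ball around $p$ meets $\mathcal{D}$ only in an open subarc of the spine through $z_j^{(n)}$, which is connected.

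\textbf{No simple closed curve.} The projection $\pi(u,v)=u$ is a continuous retraction of $\mathcal{D}$ onto $I\times\{0\}$. Suppose $C\subset \mathcal{D}$ is homeomorphic to $\So$; then $\pi(C)$ is a compact connected subset of $[0,1]$, hence an interval $[a,b]$. If $a=b$, then $C$ lies in the vertical fibre over $a$, which is either a single point or an arc, and neither contains a copy of $\So$. If $a<b$, I would choose $c\in(a,b)\setminus\bigcup_{n,j}\{z_j^{(n)}\}$, which is possible since the excluded set is countable. The fibre $\pi^{-1}(c)\cap \mathcal{D}$ is then just $\{(c,0)\}$, so this point lies on $C$. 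Removing $(c,0)$ disconnects $\mathcal{D}$: every spine is rooted at some $z_j^{(n)}\neq c$ and so lies entirely on one side of $c$, whence $\mathcal{D}\setminus\{(c,0)\}$ splits according to the sign of the first coordinate minus $c$. Since $\pi(C)=[a,b]$ contains points on either side of $c$, the set $C\setminus\{(c,0)\}$ is disconnected, contradicting the fact that $\So$ minus a point is connected.

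The step I expect to be the main obstacle is local connectedness, since the bases of the spines are dense in $I\times\{0\}$; the decisive input is the rapid decay of the spine heights $1/2^n$, which guarantees that above any fixed positive height only finitely many spines survive, so the open balls can be trimmed to connected sets.
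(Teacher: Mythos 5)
Your proof is correct and, where it overlaps with the paper, follows the same route: the paper's proof consists solely of the local-connectedness check, with the same two cases (base line versus spine) and the same finiteness observation that only finitely many spines exceed any fixed positive height, while you additionally verify compactness, connectedness and the absence of simple closed curves, which the paper dismisses with ``it is enough to show that $\mathcal{D}$ is locally connected.'' One small repair: in the spine case of local connectedness you should require $\eps \le y/2$ rather than $\eps < y$, since a point of the $\eps$-ball can have height as low as $y-\eps$, and that height must stay above $y/2$ for your finite list of competing spines to be exhaustive.
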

\begin{proof}
It is enough to show that $\mathcal{D}$ is locally connected. 
Let $x \in \mathcal{D}$ and let $U$ be a non-empty open set such that $x\in U$. Let consider two cases.
First if $x\in \mathcal{D}\setminus \left(I \times \{0\}\right).$ 
There exist some $t \geq 0$ such that $x=(a,b) \in \mathcal{D} \cap I \times \left[\frac{1}{2^{t+1}}, \frac{1}{2^t}\right].$
Let denote $$\eps_1 = \min \left\{ \left|b-\frac{1}{2^{t-1}}\right|, \left|b-\frac{1}{2^{t+2}}\right| \right\}$$
and 
$$\delta_t = \min_{n \leq t+2}  \min_{1\leq l \leq L_n }  {\left\{ \left|a-z^{(n)}_l\right|: a \neq z^{(n)}_l \right\} }
.$$
Now taking $\eps = \frac{1}{2} \min \{\eps_1, \delta_t\}$ the set $$V=((a-\eps, a+\eps) \times (b-\eps, b+\eps)) \cap \mathcal{D}$$ is connected because it is a segment $\{a\} \times (b-\eps, b+\eps).$
Finally if $x=(x_0,0) \in I\times \{0\}$ then taking neighborhood $(x_0-\eps_0, x_0+\eps_0) \times [0,\eps_0)$ for some $\eps_0 >0$ we get connected set $V\subset U.$ It ends the proof.
\end{proof}
For $n\in N_0$ and $k\in \{1, \dots, L_n\} \subset \N$ denote by $\phi_n, \psi_{n,k}$ and $\widehat{\psi}_{n,k}$ increasing linear functions such that 
$$\phi_n\left(\left[\frac{3}{2^{n+2}},\frac{1}{2^{n}}\right]\right)=\left[0, \frac{1}{2^n}\right],$$

$$\psi_{n,k}\left(\left[\frac{1}{2^{n+1}},\frac{3}{2^{n+2}}\right]\right)=
\begin{cases}
\left[z_{k}^{(n)},z_{k+1}^{(n)}\right]  \text{ if $k \neq L_n,$}\\
\\
\left[z_{L_{n+1}-1}^{(n+1)},z_{L_{n+1}}^{(n+1)}\right]  \text{ if $k=L_n,$}\\
\end{cases}
$$

$$\widehat{\psi}_{n,k}\left(\left[\frac{1}{2^{n+1}},\frac{3}{2^{n+2}}\right]\right)=
\begin{cases}
\left[z_{L_n-k}^{(n)},z_{L_n+1-k}^{(n)}\right]  \text{ if $k \neq L_n,$}\\
\\
\left[z_{1}^{(n+1)},z_{2}^{(n+1)}\right]  \text{ if $k=L_n.$}\\
\end{cases}
$$
Now we  define a map $f: \mathcal{D} \to \mathcal{D}.$ Given $(x,y) \in \mathcal{D}$ we consider the following cases:
\begin {enumerate}[(1)]
\item if $x=z_k^{(n)},  y\in \left[\frac{3}{2^{n+2}},\frac{1}{2^{n}}\right]$ for some $n\in \N_0,$ $k\in \{1, \dots, L_n\} \subset \N$ and $n$ is even then
$$f(x,y)=f\left(\left(z_k^{(n)},y\right)\right)=
\begin{cases}
\left(z_{k+1}^{(n)}, \phi_n (y) \right) \text{ if $k \neq L_n$}\\
\\
\left(z_{L_{n+1}}^{(n+1)},\frac{1}{2} \phi_n (y)\right) \text{ if $k=L_n$},\\
\end{cases}
$$

\item if $x=z_{L_n+1-k}^{(n)}$ and $y\in \left[\frac{3}{2^{n+2}},\frac{1}{2^{n}}\right]$ for some $n\in \N_0,$ 
$k \in \{1, \dots, L_n\}\subset \N$  and  $n$ is odd then
$$f(x,y)=f\left(\left(z_{L_n+1-k}^{(n)},y\right)\right)=
\begin{cases}
\left(z_{L_n-k}^{(n)}, \phi_n (y) \right) \text{ if  $k \neq L_n,$}\\
\\
\left(z_{1}^{(n+1)},\frac{1}{2} \phi_n (y)\right) \text{ if  $k =L_n,$} \\
\end{cases}
$$

\item 
if $x=z_k^{(n)}$ and  $ y\in \left[\frac{1}{2^{n+1}},\frac{3}{2^{n+2}}\right]$ for some $n\in \N_0$  and $k\in \{1, \dots, L_n\}\subset \N$ then
$$f(x,y)=f\left(\left(z_k^{(n)},y\right)\right)=
\begin{cases}
\left(\psi_{n,k}(y),0\right) \text{ if $n$ is even},\\
\\
\left(\widehat{\psi}_{n,k}(y),0\right) \text{ if $n$ is odd},\\
\end{cases}
$$
\item if $x=z_k^{(n)}$ and $y\in \left[0,\frac{1}{2^{n+1}}\right]$ for some $n \in \N_0$ and $k\in \{1, \dots, L_n\}\subset \N$ then
 $$f(x,y)=f\left(\left(z_k^{(n)},y\right)\right)=\pi\left(A_{n,k}\right),$$
\item if $(x,y) \in I \times \{0\}$ then $$f(x,y)=(x,0).$$
\end{enumerate}
By $I^{n}_{k}$ we denote the segment connecting points $\left(z_k^{(n)},0\right)$ and $A_{n,k}.$ Note that $$\diam I_k^n=\frac{1}{2^n}$$ for every $k\in \{1,2,\dots, L_n\}.$
\begin{figure}[htp]
	\centering
	\includegraphics[width=0.5\linewidth]{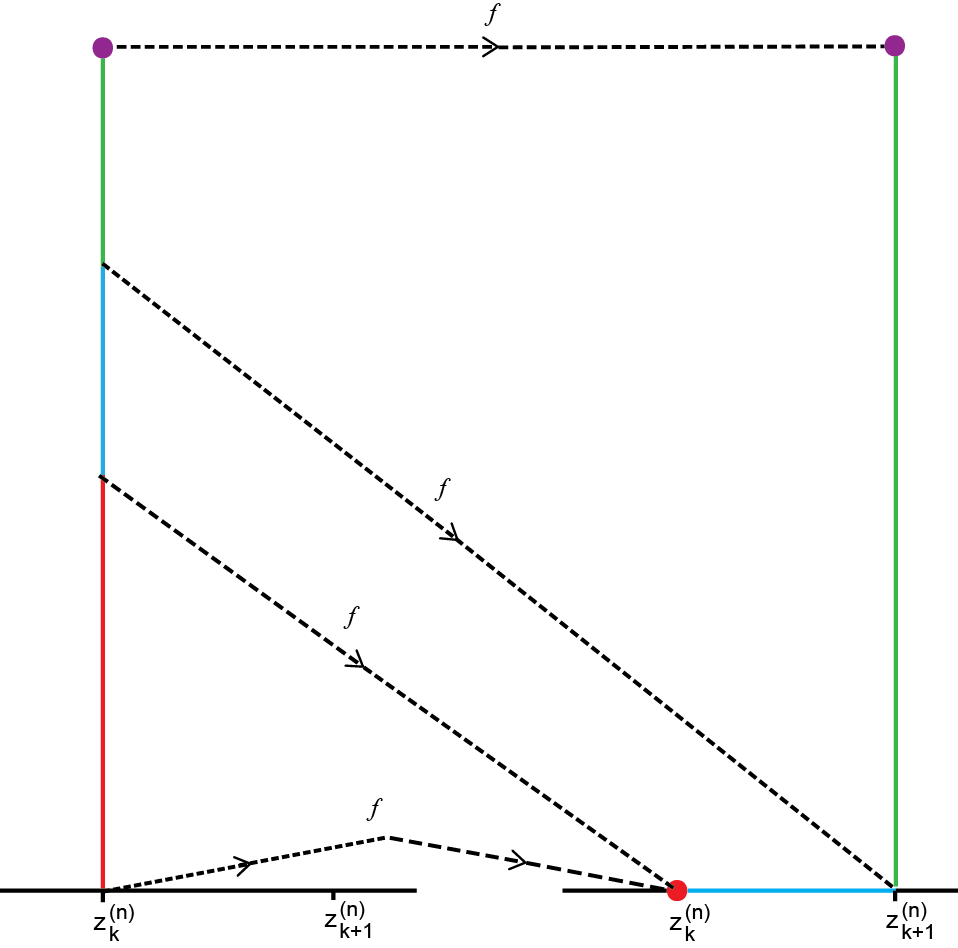}
	\caption{Sketch how the map $f$ acts}
	\label{fig:funF}
\end{figure}

\begin{lem}
The map $f$ is a continuous map on $\mathcal{D}$.
\end{lem}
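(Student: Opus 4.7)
The plan is to verify continuity pointwise, using that $f$ is defined by linear (hence continuous) formulas on finitely many closed pieces covering $\mathcal{D}$: the base $I\times\{0\}$, and on each vertical segment $I_k^n$ the three subintervals corresponding to cases (1)/(2), (3), and (4). The standard pasting lemma reduces the problem to checking (i) that the formulas agree on overlaps, (ii) that small neighborhoods of interior points of segments lie inside a single segment, and (iii) continuity at accumulation points, which are precisely the base points.

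For (i) I would simply evaluate the formulas at the three transition heights. At $y=3/2^{n+2}$ both cases (1)/(2) and (3) give $(z_{k+1}^{(n)},0)$ (or the analogous point in $Z^{(n+1)}$ when $k=L_n$), since $\phi_n(3/2^{n+2})=0$ and $\psi_{n,k}(3/2^{n+2})=z_{k+1}^{(n)}$; at $y=1/2^{n+1}$ both cases (3) and (4) give $(z_k^{(n)},0)$; and at $y=0$ both cases (4) and (5) give $(z_k^{(n)},0)$. For (ii), at a point $p=(z_k^{(n)},y_0)$ with $y_0>0$ only finitely many other segments reach height $y_0/2$ (those with $n'\leq\log_2(2/y_0)$), so a sufficiently small Euclidean $\eps$-ball around $p$ excludes all base points $z_{k'}^{(n')}\neq z_k^{(n)}$ with $n'\leq\log_2(2/y_0)$, and what remains of the ball is a subinterval of $I_k^n$, on which $f$ is continuous by (i).

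The main obstacle, and the step I would expect to write in full detail, is continuity at a base point $q_0=(x_0,0)$, where $f(q_0)=q_0$. The key input is that the mesh of $\bigcup_{p\leq n}Z^{(p)}\cup\{0,1\}$ tends to $0$ as $n\to\infty$: this follows from (\ref{eq:defz}) and the growth assumption $m_{n+1}\geq 2^n l_n$, because consecutive points in $Z^{(n)}$ lying in one or two adjacent intervals of the previous subdivision are at distance at most $2/(m_n+1)$ times the previous mesh. Given $\eps>0$, I would choose $N$ so large that $1/2^N<\eps/3$ and the mesh at level $n$ is less than $\eps/3$ for all $n\geq N$, and then set $\delta=\min(\eps/3,\,1/2^{N+1})$. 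For $q\in\mathcal D$ with Euclidean distance from $q_0$ less than $\delta$ lying on a vertical segment, write $q=(z_k^{(n)},y)$: in case (4) the image is $(z_k^{(n)},0)$, at distance $|z_k^{(n)}-x_0|<\delta<\eps$ from $q_0$; in the remaining cases the requirement $y\geq 1/2^{n+1}$ combined with $y<1/2^{N+1}$ forces $n\geq N$, so $f(q)$ has first coordinate within $\eps/3$ of $z_k^{(n)}$ (hence within $2\eps/3$ of $x_0$) and second coordinate at most $1/2^n<\eps/3$, giving total distance less than $\eps$. Points on the base itself are handled by $f$ being the identity there. The delicate part of the argument is precisely the mesh-shrinking estimate, which is what bridges the combinatorial growth condition on the $m_n$'s with the topological continuity required here.
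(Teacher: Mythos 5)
Your proposal is correct and takes essentially the same route as the paper: away from the base a sufficiently small ball meets only the single segment $I_k^n$, on which $f$ is piecewise linear, and continuity at a base point reduces to the fact that the displacement $\rho(f(q),q)$ for $q\in I_k^n$ is bounded by the segment height $\frac{1}{2^n}$ plus the horizontal gap between consecutive subdivision points, both of which tend to $0$. Your ``mesh-shrinking estimate'' is precisely the paper's bound $\frac{1}{l_n+1}$ on the gaps of the level-$n$ subdivision (up to a harmless factor of $2$ for consecutive points of $Z^{(n)}$ straddling a level-$(n-1)$ point), so the two arguments differ only in presentation ($\eps$--$\delta$ and the pasting lemma versus sequential continuity with a case analysis).
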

\begin{proof}
Let $y \in\mathcal{D}$ and $\left(y_n\right)_{n=1}^{\infty} \subset  \mathcal{D}$ be the sequence such that $$\lim_{n \to \infty} y_n = y.$$
Let us consider the cases.\\
\textit{Case 1. }
Let $y \in \mathcal{D} \setminus \left(I \times \{0\}\right)$ i.e. $y\in  I_k^m$ for some $m\in \N$ and $k\in \{1,2,\dots, L_m\}.$ 
\begin{enumerate} [({1}a)]
\item If we have that $\rho \left(y_n, \pi \left(y_n\right)\right) > \frac{1}{2^{m+1}}$ for all $n$ then 
$y_n\in \bigcup_{r\leq m} \bigcup_{j \leq L_r} I^r_j.$
Since there are finitely many $I_j^r$ we may assume that $y_n \in I^r_j$ for some fixed $j,r$ and all $n.$ But then $r=m$ and $j=k$ and by continuity of functions $\phi_m, \psi_m$ and $\widehat{\psi}_m$ we get that $f$ is continuous.
\item If $\rho(y, \pi(y))\leq \frac{1}{2^{m+1}} .$ It means that there exists $t> m$ such that $y=(y_1,y_2) \in I^{m}_k \cap \left(I \times \left[\frac{1}{2^{t+1}}, \frac{1}{2^{t}}\right]\right).$
Let us denote 
$$\eps_t = \min_{n \leq t+2}  \min_{1\leq l \leq L_n } 
 {\left\{ \left|y_1-z^{(n)}_l\right|: y_1\neq z^{(n)}_l \right\} } 
$$
and 
$$\delta_j=\left|y_2-\frac{1}{2^{j}}\right|.$$
Now take
$$\eps = \min \left\{ \frac{\eps_t}{2},\frac{\delta_{t-1}}{2}, \frac{\delta_{t+2}}{2} \right\}$$
and assume that $\rho(y_n, y) <\eps$.
We get that there exists $N \in \N$ such that for all $n>N$  we have  $y_n \in I^m_k$ and $$\diam \left[y_n, \pi(y_n)\right] <\frac{1}{2^{m+1}}$$
and therefore
$$\rho(f(y_n),f(y))<\rho(y_n, y) \to 0$$
which proves that $f$ is continuous.

\end{enumerate}
\textit{Case 2. }
Let $y \in I \times \{0\}$ and $\rho \left(y_n, \pi \left(y_n\right)\right) < \frac{1}{2^{m+1}}$ for all $n.$
\begin{enumerate}[({2}a)]
\item If $f(y_n)=\pi(y_n)$  for all $n$  
then
 $$\rho(f(y_n),f(y)) \leq \rho(\pi(y_n), y) < \rho(y_n, y) \to 0$$
 so the $f$ is continuous.
 \item If $f(y_n) \neq \pi(y_n)$ for all $n$ then $y_n\in I^r_j$ for some $r>0$ and $j \in \{1,2,\dots, L_r\}$ and $\diam I^r_j < \frac{1}{2^m}$. 
 It means that for even $n$ we have $$f(y_n) \in I^r_{j+ 1}$$ or 
$$f(y_n) \in \left[z_j^{(r)}, z_{j+1}^{(r)}\right]$$ or
$$f(y_n) \in I^{r+1}_{L_{r+1}}  \text{ for } j=L_r$$ or
$$f(y_n) \in \left[z_{L_r}^{(r)}, z_{L_{r+1}}^{(r+1)}\right]$$
 and by the other hand if $r$ is odd we have
$$f(y_n) \in I^r_{j-1}$$ or
$$f(y_n) \in \left[z_{j-1}^{(r)}, z_{j}^{(r)}\right]$$  or
$$f(y_n) \in I^{r+1}_1 \text{ for } j=1$$ or
$$f(y_n) \in \left[z_1^{(r+1)}, z_{1}^{(r)}\right].$$ 
Now we get 
 $$\rho(f(y_n),f(y)) \leq \rho(f(y_n),y_n)+\rho(y_n,y).$$
If $f(y_n) \in I^r_{j\pm 1}$ then
$$\rho(f(y_n),y_n) \leq \frac{2}{2^r}+\frac{1}{l_r+1} \leq \frac{3}{2^r} <\frac{1}{2^{r-2}}.$$ 
and if $f(y_n) \in I^{r+1}_{L_{r+1}}$  or $f(y_n) \in I^{r+1}_{1}$ we get that
$$\rho(f(y_n),y_n)\leq \frac{1}{2^r}+\frac{1}{l_r+1} +\frac{1}{2^{r+1}}\leq \frac{3}{2^r}< \frac{1}{2^{r-2}} .$$
If $f(y_n) \in I \times  \{0\}$ we get
$$\rho(f(y_n),y_n)\leq \frac{1}{2^r}+\frac{1}{l_r+1} \leq \frac{1}{2^{r-1}} <\frac{1}{2^{r-2}}.$$
Now we have that
 $$\rho(f(y_n),f(y)) \leq \rho(f(y_n),y_n)+\rho(y_n,y) < \frac{1}{2^{r-2}}+\rho(y_n,y) \to 0$$
because if $n$ grows then $r$ also grows. 
Indeed,  by contradiction suppose that there exists $r(n)$ and subsequence $(y_{n_k})_{k=1}^{ \infty}$  such that $y_{n_k} \in I^r_j$ for all $k$. We get that $y \in I^r_j$ and $$\rho(f(y_{n_k}), \pi (y_{n_k})) <\frac{1}{2^{r+1}}$$ and it implies that $f(y_{n_k})=\pi(y_{n_k})$ which is contradiction with assumptions that $f(y_n) \neq \pi(y_n)$ in this case. So, the proof of continuity of the map $f$ is complete.

\end{enumerate}

\end{proof}
\begin{lem} \label{lem:dis}
Let $x,y \in \mathcal{D}$. If for every positive integer $n$ points $f^n(x),f^n(y)$ are not fixed points of $f$ then $$\lim_{n \to \infty} \rho(f^n(x), f^n(y))=0.$$ 

\end{lem}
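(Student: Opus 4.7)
The fixed point set of $f$ is exactly $I\times\{0\}$ by case~(5), and cases~(3) and~(4) of the definition both carry their inputs into $I\times\{0\}$. Hence the hypothesis that $f^n(x),f^n(y)$ are never fixed points forces every iterate of both orbits to sit on a prong $I_j^k$ with second coordinate in $[3/2^{k+2},1/2^k]$, and each application of $f$ to be governed by case~(1) or~(2). I would begin by recording this and describing the resulting motion: on level $k$ the orbit visits the prongs in the order $1,2,\dots,L_k$ (for even $k$) or $L_k,L_k-1,\dots,1$ (for odd $k$), and reaching the terminal prong transitions at the next step to the entry prong of level $k+1$ (prong $L_{k+1}$ for even $k$, prong $1$ for odd $k$). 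Each complete traversal of level $k$ thus consumes exactly $L_k$ iterations.

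Let $T_x(k)$ be the first iterate at which the orbit of $x$ lies on level $k$, and $T_y(k)$ analogously. For all sufficiently large $k$ both times exist and $T_x(k+1)=T_x(k)+L_k$, $T_y(k+1)=T_y(k)+L_k$, so the offset $\Delta:=T_y(k)-T_x(k)$ is eventually independent of $k$; without loss of generality, $\Delta\geq 0$. For time $n$ in the ``overlap'' window $[T_y(k),T_x(k+1))$ of length $L_k-\Delta$, the iterates $f^n(x),f^n(y)$ sit on level-$k$ prongs whose indices differ by exactly $\Delta$ in the traversal order. The horizontal separation in $\mathcal{D}$ is therefore at most $\Delta$ times the largest gap between consecutive prongs at level $k$, which by~\eqref{eq:defz} is bounded by $2\eps_{k-1}/(m_k+1)$, where $\eps_{k-1}=\max_i(x_{i+1}^{(k-1)}-x_i^{(k-1)})$; the vertical separation is at most the sum of the two heights, hence $\leq 2\cdot 2^{-k}$. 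Both tend to $0$: the heights via the factor $2^{-k}$, and the horizontal term because the refinement bound $\eps_k\leq \eps_{k-1}/(m_k+1)$ combined with $m_k\geq 2^{k-1}l_{k-1}$ forces $\eps_k\to 0$ very rapidly.

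It remains to handle the ``non-overlap'' window $[T_x(k+1),T_y(k+1))$ of length $\Delta$, during which $x$ sits on an early prong of level $k+1$ while $y$ is still finishing level $k$ on a terminal prong. By the entry/exit description above, both exit of level $k$ and entry of level $k+1$ happen near the \emph{same} endpoint of $I$ (either $0$ or $1$, depending on parity), so both horizontal coordinates lie in the single sub-interval of the level-$(k-1)$ enumeration adjacent to that endpoint; consequently their horizontal separation is at most $\eps_{k-1}$, while the vertical contribution is at most $1/2^k+1/2^{k+1}=3\cdot 2^{-(k+1)}$. Both tend to $0$, so $\rho(f^n(x),f^n(y))\to 0$. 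The main technical difficulty will be the spacing bookkeeping---in particular the boundary gaps between consecutive sub-intervals (which can be up to twice the within-sub-interval spacing), and using~\eqref{eq:defz},\eqref{eq:diamz} together with the growth of $m_k$ to verify that the maximum prong gap at level $k$ vanishes uniformly over $[0,1]$.
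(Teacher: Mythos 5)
Your proposal is correct and takes essentially the same route as the paper's proof: both arguments reduce to the observation that the two orbits eventually traverse the same level with a fixed index offset $\Delta$, bound the distance by the two prong heights $2^{-k}$ plus the horizontal gap between prongs offset by $\Delta$ (which vanishes since all level-$k$ gaps equal $1/(l_k+1)$), and treat the level-transition window separately using that exit and entry both occur near the same endpoint of $I$. Your ``overlap/non-overlap window'' bookkeeping corresponds exactly to the paper's cases (a), (b), (c).
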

\begin{proof}
Without loss of generality we may assume that there are $m, i \geq 0$ and $k \geq 0$ such that $y\in I^m_i$ and $f^k(x)\in I^m_i.$
Let $\eps>0$. Then there exist $s$ and $m^{\prime} $ such that $\frac{1}{2^{m^{\prime}-1}}+\frac{k}{l_{m^{\prime} }+1}<\eps$ and
$$f^s(y) \in  I^{m^ {\prime}}_{i+k},$$

$$f^s(x) \in  I^{m^{\prime}}_{i}$$
and 
$$\dist\left(I^{m^ {\prime}}_{i+k}, I^{m^{\prime} }_{i}\right)<  \frac{k}{l_{m^{\prime} }+1}$$
for $i , i+k \in \{1,2, \dots, L_{m^{\prime} }\}.$
Now we have 
$$\rho\left(f^s(x), f^s(y)\right) \leq \diam{I^{m^ {\prime}}_{i+k}} + \dist\left(I^{m^ {\prime}}_{i+k}, I^{m^{\prime} }_{i}\right) + \diam{I^{m^{\prime} }_{i}}  \leq \frac{1}{2^{m^{\prime}-1}}+\frac{k}{l_{m^{\prime} }+1}<\eps.$$

By definition there is $M>0$ such that for $r \geq s+M$ we have  $f^r(x) \in I_j^{m^{\prime \prime}}$ for some $j$ and $m^{\prime \prime} > m^{\prime}.$ Let us consider the cases.

\begin{enumerate}[(a)]
\item If $r$ is even and $f^r(y) \in I_{j+k}^{m^{\prime \prime}}$ for $j+k \leq L_{m^{\prime \prime}}$ or if $r$ is odd and $f^r(y) \in I^{m^{\prime \prime}}_{j-k}$ for $j-k \geq 1$
we get that
\begin{align*}
\rho\left(f^r(x), f^r(y)\right) & \leq \diam{I^{m^{\prime \prime}}_{j \pm k}} + \dist\left(I^{m^{\prime \prime}}_{j \pm k}, I^{m^{\prime \prime}}_{j}\right) + \diam{I^{m^{\prime \prime}}_{j}}  \leq \\& 
\leq \frac{1}{2^{m^{\prime \prime}-1}}+\frac{k}{l_{m^{\prime \prime}}+1} \leq \frac{1}{2^{m^{\prime}-1}}+\frac{k}{l_{m^{\prime} }+1}<\eps.
\end{align*}

\item If $j+k > L_{m^{\prime \prime}}$ and $r$ is even then $f^r(y) \in I_{L_{m^{\prime \prime}+1}+L_{m^{\prime \prime}}-j-k}^{m^{\prime \prime}+1}$ and we have

\begin{align*}
\rho\left(f^r(x), f^r(y)\right) & \leq \diam{I_{L_{m^{\prime \prime}+1}+L_{m^{\prime \prime}}-j-k}^{m^{\prime \prime}+1}} + 
\dist\left(I_{L_{m^{\prime \prime}+1}+L_{m^{\prime \prime}}-j-k}^{m^{\prime \prime}+1}, I^{m^{\prime \prime}}_{j}\right) 
+ \diam{I^{m^{\prime \prime}}_{j}} \leq 
\\&
\leq \frac{1}{2^{m^{\prime \prime}+1}}+
\dist\left(I_{L_{m^{\prime \prime}+1}+L_{m^{\prime \prime}}-j-k}^{m^{\prime \prime}+1}, I^{m^{\prime \prime}}_{L_{m^{\prime \prime}}}\right) 
+
\dist\left(I_{L_{m^{\prime \prime}}}^{m^{\prime \prime}}, I^{m^{\prime \prime}}_{j}\right) 
+
\frac{1}{2^{m^{\prime \prime}}} \leq
\\&
\leq \frac{1}{2^{m^{\prime \prime} +1}}+\frac{j+k-L_{m^{\prime \prime}}}{l_{m^{\prime \prime}}+1}+\frac{L_{m^{\prime \prime}}-j}{l_{m^{\prime \prime}}+1}+\frac{1}{2^{m^{\prime \prime}}} \leq
\\&
\leq \frac{1}{2^{m^{\prime \prime}-1}}+\frac{k}{l_{m^{\prime \prime}}+1} \leq \frac{1}{2^{m^{\prime}-1}}+\frac{k}{l_{m^{\prime} }+1}<\eps.
\end{align*}
\item  If $f^r(y) \in I^{m^{\prime \prime}-1}_{k-j}$ for $j-k<1$ and $r$ is odd then in the similar way as above we get

\begin{align*}
\rho\left(f^r(x), f^r(y)\right) & \leq \diam{I_{k-j}^{m^{\prime \prime} - 1}} + \dist\left(I_{k-j}^{m^{\prime \prime} - 1}, I^{m^{\prime \prime}}_{j}\right) + \diam{I^{m^{\prime \prime}}_{j}} \leq \\&
\leq \frac{1}{2^{m^{\prime \prime}-1}}+
\dist\left(I_{k-j}^{m^{\prime \prime} - 1}, I_{1}^{m^{\prime \prime} - 1}\right)+
\dist\left(I_{1}^{m^{\prime \prime} - 1}, I^{m^{\prime \prime}}_{j}\right)+
\frac{1}{2^{m^{\prime \prime}}} \leq
\\&
\leq \frac{1}{2^{m^{\prime \prime} - 1}}+\frac{k-j}{l_{m^{\prime \prime}- 1}+1}+\frac{j}{l_{m^{\prime \prime}-1}+1}+\frac{1}{2^{m^{\prime \prime}}} \leq
\\&
\leq \frac{1}{2^{m^{\prime \prime}-2}}+\frac{k}{l_{m^{\prime \prime}-1}+1} \leq \frac{1}{2^{m^{\prime}-1}}+\frac{k}{l_{m^{\prime} }+1}<\eps.
\end{align*}
\end{enumerate}
Finally we get that for all $r\geq s+M$
$$\rho\left(f^r(x), f^r(y)\right) < \eps,$$
but $\eps$ can be arbitrarily small, thus
$$\lim_{r \to \infty} \rho \left(f^r(x), f^r(y) \right)=
0$$ and we get that $(x,y) \in \mathcal{D}\times \mathcal{D}$ is asymptotic.
\end{proof}

\begin{lem} \label{lem:wn}
There exists the sequence $(w_n)_{n=0}^{\infty}$ such that 
$$\lim_{n \to \infty} w_n=0$$ and  for each even $n$ and for $j \in \{1, 2,\dots, m_{n+1}\}$ 
$$\rho\left(f^{l_n+j}\left(\frac{1}{2},1\right),(1,0)\right) \leq  w_n $$
and for odd $n$ we have
$$\rho\left(f^{l_n+j}\left(\frac{1}{2},1\right),(0,0)\right) \leq  w_n .$$
\end{lem}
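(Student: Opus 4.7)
The plan is to trace the orbit of $\left(\frac{1}{2},1\right)$ explicitly using the case-by-case definition of $f$, and to read off the $x$- and $y$-coordinates of every intermediate iterate.

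I would first prove by induction on $n\geq 0$ that
\[
f^{l_n}\left(\tfrac{1}{2},1\right) = \begin{cases} \left(z_{L_{n+1}}^{(n+1)},\, 1/2^{n+1}\right) & \text{if $n$ is even,}\\ \left(z_1^{(n+1)},\, 1/2^{n+1}\right) & \text{if $n$ is odd,}\end{cases}
\]
and along the way identify the intermediate points $f^{l_n+j}\left(\tfrac{1}{2},1\right)$ for $j=1,\dots,L_{n+1}$. The base case $n=0$ follows from case (1) applied to $\left(\tfrac{1}{2},1\right)$ with $k=L_0=1$. For the inductive step with $n$ even, I start at $\left(z_{L_{n+1}}^{(n+1)},1/2^{n+1}\right)$ at the right end of level $n+1$. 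Because $n+1$ is odd and $1/2^{n+1}\in[3/2^{n+3},1/2^{n+1}]$, case (2) applies and $\phi_{n+1}$ fixes $1/2^{n+1}$, so consecutive iterates walk leftward:
\[
f^{l_n+j}\left(\tfrac{1}{2},1\right) = \left(z_{L_{n+1}-j}^{(n+1)},\, 1/2^{n+1}\right),\qquad j=0,1,\dots,L_{n+1}-1,
\]
and the $L_{n+1}$-th iterate lands at $\left(z_1^{(n+2)},1/2^{n+2}\right)$ via case (2) with $k=L_{n+1}$. The odd $n$ case is the mirror image, walking rightward via case (1) from $\left(z_1^{(n+1)},1/2^{n+1}\right)$ toward $\left(z_{L_{n+2}}^{(n+2)},1/2^{n+2}\right)$.

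Next I would bound the distances. Because $j\leq m_{n+1}$, formula \eqref{eq:defz} pins down the $x$-coordinate in a small neighbourhood of the appropriate endpoint of $I$: in the even case the subscript $L_{n+1}-j$ ranges over $\{l_nm_{n+1},\dots,L_{n+1}-1\}$, so the $x$-coordinate lies in $\left[x_{l_n-1}^{(n)},1\right]$; in the odd case the subscript $j+1$ ranges over $\{2,\dots,m_{n+1}+1\}$, so the $x$-coordinate lies in $\left[0,x_2^{(n)}\right]$. Since the metric on $\mathcal{D}$ is the arc length, the distance from $\left(x,1/2^{n+1}\right)$ to $(1,0)$ is at most $1/2^{n+1}+(1-x)$, and similarly to $(0,0)$ it is at most $1/2^{n+1}+x$. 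Hence setting
\[
w_n = \frac{1}{2^{n+1}} + \begin{cases} 1-x_{l_n-1}^{(n)} & \text{if $n$ is even,}\\ x_2^{(n)} & \text{if $n$ is odd,}\end{cases}
\]
yields the claimed pointwise bounds.

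Finally I would show $w_n\to 0$. Iterating \eqref{eq:defz} at the left endpoint gives $x_1^{(n)} = \tfrac{1}{2}\prod_{k=1}^{n}(m_k+1)^{-1}$, so $x_2^{(n)}=2x_1^{(n)}\to 0$. A symmetric computation at the right endpoint gives $1-x_{l_n}^{(n)} = \tfrac{1}{2}\prod_{k=1}^{n}(m_k+1)^{-1}$, and since the point sets nest we have $x_{l_n-1}^{(n)}\geq x_{l_{n-1}}^{(n-1)}$, so $1-x_{l_n-1}^{(n)}\to 0$ as well. I expect the main obstacle to be the careful bookkeeping in the inductive step that verifies the case split of $f$ matches the orbit position at every iteration; in particular the ``turnaround'' iteration where $k=L_{n+1}$ triggers the height-halving transition between levels $n+1$ and $n+2$ needs attention.
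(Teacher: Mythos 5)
Your proof is correct and follows essentially the same route as the paper's: identify where the iterates $f^{l_n+j}\left(\frac{1}{2},1\right)$ sit (tops of spikes over points of $Z^{(n+1)}$ lying in the extreme gaps of stage $n$) and bound the distance to $(1,0)$ or $(0,0)$ by the spike height plus the horizontal offset, yielding a $w_n$ comparable to the paper's $\frac{1}{2^n}+\frac{1}{l_n+1}$. You are in fact somewhat more careful than the paper, which omits the explicit orbit induction and has a harmless off-by-one (the iterate with $j=m_{n+1}$ lands in the second-to-last, resp.\ second, gap rather than the last, resp.\ first, one, so the sharp horizontal bound is $\frac{2}{l_n+1}$ --- exactly what your intervals $\left[x_{l_n-1}^{(n)},1\right]$ and $\left[0,x_2^{(n)}\right]$ capture).
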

\begin{proof}
From (\ref{eq:defz}) we get that for every $k\in \N_0$ we have
$$\diam\left(z_{L_k}^{(k)}, 1\right) \leq \frac{1}{l_k+1} $$
and 
$$\diam\left(0,z_{1}^{(k)}\right) \leq \frac{1}{l_k+1}.$$ 

Note that 
$$\pi\left(f^{l_n+j}\left(\frac{1}{2},1\right)\right) \in \left(z_{L_n}^{(n)}, 1\right)\times \{0\} \subset I \times \{0\}$$
for $j=1, \dots, m_{n+1}$ and $n$ even. Now we have
\begin{align*} 
\rho & \left(f^{l_n+j}  \left(\frac{1}{2},1\right),(1,0)\right)  \leq \\&
 \leq \rho\left(f^{l_n+j}\left(\frac{1}{2},1\right),\pi\left(f^{l_n+j}\left(\frac{1}{2},1\right)\right)\right) +  \rho\left(\pi\left(f^{l_n+j}\left(\frac{1}{2},1\right)\right),(1,0)\right) \leq\\&
 \leq  \frac{1}{2^n}+\diam\left(z_{L_n}^{(n)},1\right) \leq \frac{1}{2^n} + \frac{1}{l_n+1}.
\end{align*}
When $n$ is odd we have that 
$$\pi\left(0,f^{l_n+j}\left(\frac{1}{2}\right)\right) \in \left(0,z_{1}^{(n)}\right)\times \{0\} \subset I\times \{0\}$$
 for all $j \in \{1, 2,\dots, m_{n+1}\}$ and
$$\rho\left(f^{l_n+j}\left(\frac{1}{2},1\right),(0,0)\right) \leq  \frac{1}{2^n} + \frac{1}{l_n+1}. $$
It is enough to take $$w_n=\frac{1}{l_n+1}+ \frac{1}{2^n}$$ for all $n\in \N_0$.  
\end{proof}

\begin{lem} \label{lem:DC1}
The map $f$ has DC1 pair.
\end{lem}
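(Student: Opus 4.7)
The plan is to exhibit an explicit DC1 pair consisting of $x=\left(\tfrac{1}{2},1\right)$ and the fixed point $y=(1,0)$. Since $f$ is the identity on $I\times\{0\}$, the orbit of $y$ is constant, so the problem reduces to analyzing $\rho(f^{n}(x),(1,0))$, and this is exactly what Lemma~\ref{lem:wn} controls on long blocks of times. The choice of $y$ as a fixed point is essentially forced by Lemma~\ref{lem:dis}: any pair whose orbits never hit a fixed point is asymptotic, hence cannot be DC1.

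For the upper distribution I will use the even-index blocks. Set $N_k=l_{2k}+m_{2k+1}$; by Lemma~\ref{lem:wn}, all $m_{2k+1}$ indices $i\in\{l_{2k}+1,\ldots,N_k\}$ satisfy $\rho(f^{i}(x),y)\leq w_{2k}$. Fixing $t>0$ and taking $k$ large enough that $w_{2k}<t$, the growth condition $m_{n+1}\geq 2^{n}l_{n}$ gives
\[
F_{xy}^{(N_k)}(t)\geq\frac{m_{2k+1}}{l_{2k}+m_{2k+1}}\geq\frac{2^{2k}}{1+2^{2k}}\xrightarrow[k\to\infty]{}1,
\]
so $F_{xy}^{*}(t)=1$ for every $t>0$. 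For the lower distribution at $s=\tfrac{1}{2}$ I will use the odd-index blocks. With $M_k=l_{2k+1}+m_{2k+2}$, Lemma~\ref{lem:wn} places $f^{l_{2k+1}+j}(x)$ within $w_{2k+1}$ of $(0,0)$ for $j\in\{1,\ldots,m_{2k+2}\}$. Since $\rho$ is the arclength metric on $\mathcal{D}$, the segment $I\times\{0\}\subset\mathcal{D}$ gives $\rho((0,0),(1,0))=1$, and the triangle inequality then yields
\[
\rho(f^{l_{2k+1}+j}(x),y)\geq 1-w_{2k+1}>\tfrac{1}{2}
\]
for all sufficiently large $k$. Thus these $m_{2k+2}$ indices contribute nothing to $F_{xy}^{(M_k)}(\tfrac{1}{2})$, and at most $l_{2k+1}$ indices can remain, so $F_{xy}^{(M_k)}(\tfrac{1}{2})\leq l_{2k+1}/M_k\leq 1/(1+2^{2k+1})\to 0$. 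Hence $F_{xy}(\tfrac{1}{2})=0$, and $(x,y)$ is a DC1 pair.

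The only delicate point is the distance estimate on the dendrite, namely that proximity to $(0,0)$ in the intrinsic metric of $\mathcal{D}$ really forces distance close to $1$ from $(1,0)$. This is immediate once one uses that $\rho$ is the arclength metric on the tree $\mathcal{D}$ (consistent with the explicit estimates $\diam I_{k}^{n}=1/2^{n}$ employed earlier in the section), because any arc from a point near $(0,0)$ to $(1,0)$ has length close to $1$. Everything else is bookkeeping driven by the rapid growth $m_{n+1}\geq 2^{n}l_{n}$.
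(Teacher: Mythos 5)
Your proposal is correct and follows essentially the same route as the paper: the same pair $\left(\tfrac{1}{2},1\right)$ and $(1,0)$, the same use of Lemma~\ref{lem:wn} on the blocks of length $m_{n+1}$ together with the growth condition $m_{n+1}\geq 2^{n}l_{n}$ to push the frequencies to $1$ and $0$ respectively. If anything, your version is cleaner: you correctly attach the even-$n$ blocks to the upper distribution and the odd-$n$ blocks to the lower one (the paper's proof swaps the words ``odd'' and ``even'' at that point, evidently a typo), and you make explicit the triangle-inequality step turning proximity to $(0,0)$ into separation from $(1,0)$, which the paper only asserts.
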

\begin{proof}

We will show that points $(\frac{1}{2},1)$ and $(1,0)$ form a  DC1 pair i.e. 
$$\liminf_{n \to \infty}\frac{1}{n}\left|\left\{0\leq m\leq n-1:\rho\left(f^m\left(\frac{1}{2},1\right),(1,0)\right)< s\right\}\right|=0$$ for some $s>0$ and
$$\limsup_{n \to \infty}\frac{1}{n}\left|\left\{0\leq m\leq n-1:\rho\left(f^m\left(\frac{1}{2},1\right),(1,0)\right) <t\right\}\right|=1$$ for all $t>0.$

From Lemma \ref{lem:wn}  we have for each even $n$ and for $j \in \{1, 2,\dots, m_{n+1}\}$ 
$$\rho\left(f^{l_n+j}\left(\frac{1}{2},1\right),(1,0)\right) \leq w_n $$
and for all sufficiently large odd $n$ we have
$$\rho\left(f^{l_n+j}\left(\frac{1}{2},1\right),(1,0)\right) > \frac{1}{2}.$$
For every $t$ we fix sufficiently large $N=N(t) \in \N$ then for all odd $n \geq N$ inequality holds
\begin{multline*}
\frac{\left| \left\{0\leq j\leq l_n+m_{n+1}-1:\rho\left(f^j\left(\frac{1}{2},1\right),(1,0)\right) <
w_N<t\right\}\right|}{l_n+m_{n+1}} \geq \\
\geq \frac{m_{n+1}}{l_n+m_{n+1}} \geq 1-\frac{l_n}{m_{n+1}} \geq 1- \frac{l_n}{2^n l_n}=1-\frac{1}{2^n}\to 1.
\end{multline*}

To show the second property of (DC1) let $s=\frac{1}{2}$ and then for all even $n \geq N$ we get

\begin{multline*}
\frac{\left| \left\{0\leq j\leq l_n+m_{n+1}-1:\rho\left(f^j\left(\frac{1}{2},1\right),(1,0)\right) <\frac{1}{2}\right\}\right|}{l_n+m_{n+1}} \leq \\\leq 1-\frac{m_{n+1}}{l_n+m_{n+1}} 
\leq \frac{l_n}{l_n+m_{n+1}} \leq \frac{l_n}{m_{n+1}} \leq \frac{l_n}{2^n l_n}=\frac{1}{2^n}\to 0.
\end{multline*}
It completes the proof.

\end{proof}

\begin{thm}\label{thm:dc1}
There exists a continuous self-map $f$ of dendrite such that:
\begin{enumerate}[(1)]
\item\label{coro:omega4:c1} $f$ has DC1 pair,
\item\label{coro:omega4:c2} $f$ does not have an infinite LY-scrambled set.
\end{enumerate}
\end{thm}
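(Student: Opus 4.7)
Conclusion (1) is already proved in Lemma~\ref{lem:DC1}. For (2), the plan is to classify all forward orbits of $f$ and show that this classification forces every LY-scrambled set to have at most two points. I would first observe that the fixed point set of $f$ coincides with $I\times\{0\}$: in cases (1)--(4) of the definition of $f$ the image has either a different first coordinate or height $0$, so only case (5) yields $f(x)=x$. Define the \emph{ridge}
\[
\mathcal{R}=\bigl\{(z_k^{(n)},1/2^n)\colon n\in \N_0,\ k\in\{1,\dots,L_n\}\bigr\}.
\]
A direct check against the definition of $f$ shows $f(\mathcal{R})\subseteq \mathcal{R}$, that $\mathcal{R}$ equals the forward orbit of $(1/2,1)$, and that no point of $\mathcal{R}$ lies in $I\times\{0\}$.

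The main technical step is to show that every $x\in\mathcal{D}\setminus \mathcal{R}$ satisfies $f^{N}(x)\in I\times\{0\}$ for some $N\ge 0$, so its orbit is eventually constant. If $x=(z_k^{(n)},y)$ with $y\in[0,3/2^{n+2}]$, then cases (3)--(4) already send $f(x)$ into $I\times\{0\}$. If $y\in[3/2^{n+2},1/2^n]$ but $y\neq 1/2^n$, I use that $\phi_n$ is an affine expansion of slope $4$ with unique fixed point $1/2^n$: the iterates $\phi_n^j(y)$ drift exponentially fast away from $1/2^n$ and eventually drop below $3/2^{n+2}$, after which one more iterate lands in $I\times\{0\}$. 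Passing to level $n+1$ only halves the height, so the argument persists, and the set of starting heights at level $n$ whose orbit stays in the high parts through every subsequent level is a nested family of intervals collapsing to the single point $1/2^n$. Hence $\mathcal{R}$ is the only orbit never meeting $I\times\{0\}$.

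Given this classification the LY analysis is immediate. Suppose $S\subseteq \mathcal{D}$ is LY-scrambled. If $x,y\in S\cap \mathcal{R}$ are distinct then neither $f^n(x)$ nor $f^n(y)$ is ever a fixed point, so Lemma~\ref{lem:dis} yields $\lim_n \rho(f^n(x),f^n(y))=0$, contradicting $\limsup>0$. If $x,y\in S\setminus \mathcal{R}$ are distinct, each is eventually constant at some $p_x,p_y\in I\times\{0\}$, so for $n$ large $\rho(f^n(x),f^n(y))=\rho(p_x,p_y)$; this forces $\liminf=\limsup$, again contradicting LY. Consequently $|S\cap\mathcal{R}|\le 1$ and $|S\setminus\mathcal{R}|\le 1$, so $|S|\le 2$ and $f$ has no infinite LY-scrambled set.

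The main obstacle is the technical claim that $\mathcal{R}$ is the unique orbit avoiding $I\times\{0\}$. The delicate point is the cross-level bookkeeping: one must check that the $L_n-k+1$ iterations of $\phi_n$ at level $n$, combined with the halving at each level transition, leave only the fixed point $1/2^n$ as a surviving starting height. The hypothesis $m_{n+1}\ge 2^n l_n$ (which makes $L_n$ grow rapidly) is precisely what forces the nested intervals of admissible heights to shrink fast enough that their intersection at each level is a single point.
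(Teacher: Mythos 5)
Your argument is correct and follows the paper's own route: item (1) is Lemma~\ref{lem:DC1}, and item (2) combines Lemma~\ref{lem:dis} (pairs whose orbits never meet the fixed-point set $I\times\{0\}$ are asymptotic) with the observation that every other orbit is eventually constant, so an LY-scrambled set has at most one point of each kind and hence at most two points. The only difference is that you additionally identify the ridge $\mathcal{R}$ as exactly the set of points whose orbits avoid $I\times\{0\}$; that classification is true (the slope-$4$ expansion of $\phi_n$ together with the halving of heights at level transitions makes the deficit-to-width ratio quadruple at every step, so it is forced by this geometry rather than by the growth condition $m_{n+1}\ge 2^n l_n$, which is only needed for the DC1 computation), but it is dispensable, since the bare dichotomy ``the orbit meets $I\times\{0\}$ or it does not'' already gives the two-element bound.
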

\begin{proof}
From Lemma \ref{lem:DC1} we have that $f$ has DC1 pair and 
from Lemma \ref{lem:dis} we get that there is no $(x, y, z)$ scrambled set. 
Indeed, let assume that $f^i(x) \notin I$ for  all $i\in \N.$ Now, if $(x,y)$ and $(x, z)$ are a $LY$-pairs then exist $n \in \N$ and $m \in \N$ such that $f^n(y)\in I$ and  $f^m(z) \in I$. 
We get that $(y, z)$ is not $LY$-pair because if $f^n(y) \neq f^m(z)$ then the distance between them is always positive and by the other hand if $f^n(y) =f^m(z)$ then the distance is always zero.
\end{proof}

\end{document}